\g@addto@macro\bfseries{\boldmath}
\theoremstyle{definition}
\newtheorem{thm}{Theorem}[subsection]
\newtheorem*{thm*}{Theorem}
\newtheorem{con}[thm]{Construction}
\newtheorem{prop}[thm]{Proposition}
\newtheorem{defn}[thm]{Definition}
\newtheorem{cor}[thm]{Corollary}
\theoremstyle{remark}
\newtheorem{ach}[thm]{Achtung!}
\newtheorem*{structure*}{Structure}
\newtheorem{rem}[thm]{Remark}
\newtheorem{exa}[thm]{Example}
\newtheorem{notation}[thm]{Notation}
\newcommand\lan{\mathsf{lan}}
\newcommand\Set{\operatorname{\bf Set}}
\newcommand\colim{\operatorname{colim}}
\newcommand\ca{\mathcal {A}}
\newcommand\cb{\mathcal {B}}
\newcommand\cf{\mathcal {F}}
\newcommand\cg{\mathcal {G}}
\newcommand\ce{\mathcal {E}}
\newcommand\cl{\mathcal {L}}
\newcommand\ct{\mathcal {T}}
\newcommand\cp{\mathcal {P}}
\newcommand\cx{\mathcal {X}}
\newcommand\cy{\mathcal {Y}}
\DeclareFontFamily{U}{min}{}
\DeclareFontShape{U}{min}{m}{n}{<-> udmj30}{}
\title{Towards Higher Topology}
\author{Ivan Di Liberti$^\dag$}
\thanks{$^\dag$ This research was mostly developed during the PhD studies of the author and has been supported through the grant 19-00902S from the Grant Agency of the Czech Republic. The finalization of this research has been supported by the GACR project EXPRO 20-31529X and RVO: 67985840.}
\address{
\newline Ivan \textsc{Di Liberti}\newline
Institute of Mathematics\newline
Czech Academy of Sciences\newline
\v{Z}itn\'{a} 25, Prague, Czech Republic\newline
diliberti.math@gmail.com\newline
}
\begin{document}

\begin{abstract}
We categorify the adjunction between locales and topological spaces, this amounts to an adjunction between (generalized) bounded ionads and topoi. We show that the adjunction is idempotent. We relate this adjunction to the Scott adjunction, which was discussed from a more categorical point of view in \cite{thcat}. We hint that $0$-dimensional adjunction inhabits the categorified one.
\end{abstract}
\maketitle
\setcounter{tocdepth}{1}

{
  \hypersetup{linkcolor=black}
  \tableofcontents
}

\section*{Introduction}
The notion of \textit{geometric entity} changes over time and with the kind of problems we want to tackle. On a macroscopic/historical level, this unspoken debate can be seen as the evolution of what mathematicians intend for the concept of \textit{space}. In the realm of very general and abstract mathematics, topological spaces are probably the most widespread concept and need no introduction, while locales are the main concept in formal topology and constructive approaches to geometry \cite{Vickers2007}. Posets with directed suprema belong mostly to domain theory \cite{abramsky1994domain}, topological tools where used by Scott to describe their properties. While one cannot claim them to have a geometric nature per se, after Scott's treatment, it is not possible to discuss them ignoring their topological features. In a sense that will be clear in few lines, topological spaces, locales and posets with directed suprema represent the conceptual framing with which this paper is concerned.
\begin{center}
\begin{tikzcd}
                                                                      & \text{Loc} \arrow[lddd, "\mathbbm{pt}" description, bend left=12] \arrow[rddd, "\mathsf{pt}" description, bend left=12] &                                                                                                                 \\
                                                                      &                                                                                                                  &                                                                                                                 \\
                                                                      &                                                                                                                  &                                                                                                                 \\
\text{Top} \arrow[ruuu, "\mathcal{O}" description, dashed, bend left=12] &                                                                                                                  & \text{Pos}_{\omega} \arrow[luuu, "\mathsf{S}" description, dashed, bend left=12] \arrow[ll, "\mathsf{ST}", dashed]
\end{tikzcd}
\end{center}
These three notions have been studied and related (as hinted by the diagram above) in the literature. Grothendieck introduced topoi as a generalization of locales of open sets of a topological space \cite{bourbaki2006theorie}. The geometric intuition has played a central rôle  since the very early days of this theory and has reached its highest peaks between the 70's and the 90's. Since its introduction, topos theory has gained more and more consensus and has shaped the language in which modern algebraic geometry has been written. The notion of ionad, on the other hand, was introduced by Garner much more recently \cite{ionads}. Whereas a topos is a categorified locale, a ionad is a categorified topological space. Accessible categories were independently defined by Lair and Rosický, even though their name was introduced by Makkai and Paré in \cite{Makkaipare}. 

The importance of accessible (and locally presentable) categories steadily grew and today they form an established framework to do category theory in the daily practice of the working mathematician. While ionads and topoi are arguably an approach to higher topology (whatever higher topology is), the significance of accessible categories with directed colimits is very far from being \textit{topological}, no matter which notion of the word we choose. In this paper we study the relationship between these two different approaches to higher topology and use this geometric intuition to analyze accessible categories with directed colimits.

We build on the analogy (and the existing literature on the topic) between \textit{plain} and higher topology and offer a generalization of the diagram above to \textbf{accessible categories with directed colimits, ionads and topoi}. 

\begin{center}
\begin{tikzcd}
                                                                      & \text{Topoi} \arrow[lddd, "\mathbbm{pt}" description, bend left=12] \arrow[rddd, "\mathsf{pt}" description, bend left=12] &                                                                                                                 \\
                                                                      &                                                                                                                  &                                                                                                                 \\
                                                                      &                                                                                                                  &                                                                                                                 \\
\text{BIon} \arrow[ruuu, "\mathbb{O}" description, dashed, bend left=12] &                                                                                                                  & \text{Acc}_{\omega} \arrow[luuu, "\mathsf{S}" description, dashed, bend left=12] \arrow[ll, "\mathsf{ST}", dashed]
\end{tikzcd}
\end{center}

The functors that regulate the interaction between these different approaches to geometry are the main characters of the paper. These are the \textbf{Scott adjunction}, which relates accessible categories with directed colimits to topoi and the \textbf{categorified Isbell adjunction}, which relates bounded ionads to topoi. Since we have a quite good understanding of the topological case, we use this intuition to guess and infer the behavior of its categorification. In the diagram above, the only functor that has already appeared in the literature is $\mathbb{O}$, in \cite{ionads}. The adjunction $\mathsf{S} \dashv \mathsf{pt}$ was achieved in collaboration with Simon Henry and has appeared in \cite{simon} and \cite{thcat}.

\subsection*{Idempotency}
The adjunction between topological spaces and locales is idempotent, and it restricts to the well known equivalence of categories between locales with enough points and sober spaces. We obtain an analogous result for the adjunction between bounded ionads and topoi, inducing an equivalence between sober (bounded) ionads and topoi with enough points. Studying the Scott adjunction is more complicated, and in general it does not behave as well as the categorified Isbell duality does, yet we manage to describe the class of those topoi which are fixed by the Scott-comonad.

\subsection*{Relevance and Impact: Stone-like dualities $\&$ higher semantics}
 This topological picture fits the pattern of Stone-like dualities. As the latter is related to completeness results for propositional logic, its categorification is related to syntax-semantics dualities between categories of models and theories. Indeed a logical intuition on topoi and accessible categories has been available for many years \cite{sheavesingeometry}. A topos can be seen as a placeholder for a geometric theory, while an accessible category can be seen as a category of models of some infinitary theory. In a further work \cite{thlo} we introduce a new point of view on ionads, defining \textbf{ionads of models} of a geometric sketch. This approach allows us to entangle the theory of classifying topoi with the Scott and Isbell adjunctions providing several comparison results in this direction.

\begin{structure*}
The paper is divided in four sections,
\begin{enumerate}
  \item[\S \ref{topology}] we recall the concrete topology on which the analogy is built: the Isbell duality, relating topological spaces to locales. We also relate the Isbell duality to Scott's seminal work on the Scott topology, this first part is completely motivational and expository. This section contains the posetal version of \Cref{categorification} and \Cref{soberenough}.
  \item[\S \ref{backgroundionads}] We recall Garner's definition of ionad and we generalize it to large (but locally small) finitely pre-cocomplete categories. We provide some new technical tools.
  \item[\S \ref{categorification}]
  We introduce the higher dimensional analogs of topological spaces, locales and posets with directed colimits: ionads, topoi and accessible categories with directed colimits. We categorify the Isbell duality building on Garner's work on ionads, and we relate the Scott adjunction to its posetal analog.  \begin{tcolorbox}
    \begin{thm*}[\ref{categorified isbell adj thm}, Categorified Isbell adjunction]
    There is  a $2$-adjunction, $$ \mathbb{O}: \text{BIon} \leftrightarrows \text{Topoi}: \mathbbm{pt}. $$
    \end{thm*}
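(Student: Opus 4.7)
The plan is to construct $\mathbbm{pt}$ as an extension of the category-of-points construction and then establish the 2-adjunction by a direct comparison of the defining data of a morphism of ionads with that of a geometric morphism. Given a Grothendieck topos $\cf$, I would define $\mathbbm{pt}(\cf)$ to be the bounded ionad whose underlying category is the category of points $\text{pt}(\cf)$ and whose interior functor $\cf \to [\text{pt}(\cf), \Set]$ is the canonical evaluation $E \mapsto (p \mapsto p^*E)$. The main technical check here is that this data really defines a \emph{bounded} ionad in the sense of \S\ref{backgroundionads}: left-exactness and cocontinuity of the evaluation follow because each $p^*$ is lex cocontinuous and both limits and colimits in $[\text{pt}(\cf),\Set]$ are computed pointwise, while boundedness should follow from standard accessibility theorems for categories of points of topoi. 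A geometric morphism $f : \ce \to \cf$ is then sent to the morphism of ionads with underlying functor the post-composition $p \mapsto f \circ p$ and topos-component the inverse image $f^* : \cf \to \ce$; the compatibility square commutes by naturality of evaluation, and the extension to 2-cells is analogous.

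For the adjunction itself, I would verify the 2-natural equivalence
\[
\text{Topoi}(\mathbb{O}(X), \cf) \;\simeq\; \text{BIon}(X, \mathbbm{pt}(\cf))
\]
by directly unpacking the data. A 1-cell $X \to \mathbbm{pt}(\cf)$ is a pair $(f_0, \hat f)$ consisting of a functor $f_0 : X \to \text{pt}(\cf)$ together with a geometric morphism $\hat f : \mathbb{O}(X) \to \cf$, subject to the interior compatibility. The key observation is that $\hat f$ already determines $f_0$: the compatibility forces $f_0(x) = \text{ev}_x \circ \text{Int}_X \circ \hat f^*$ for each $x \in X$, and this composite is lex cocontinuous, hence visibly (the inverse image of) a point of $\cf$. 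Thus the forgetful assignment $(f_0, \hat f) \mapsto \hat f$ is a bijection on 1-cells; naturality in both variables and 2-cell coherence then follow formally from the functoriality of the interior and of evaluation.

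The hard part is not the formal adjunction, which is essentially forced once $\mathbbm{pt}$ is correctly defined, but rather the verification that $\mathbbm{pt}(\cf)$ genuinely lands in bounded ionads in the generalised sense developed earlier. One must show that $\text{pt}(\cf)$ fits in the class of categories allowed by the definitions of \S\ref{backgroundionads} (typically accessibility, after Makkai--Paré) and that the evaluation meets the paper's boundedness condition on the interior. I would expect this technical content to rest on the preliminaries of \S\ref{backgroundionads} together with classical accessibility results for categories of points of a topos.
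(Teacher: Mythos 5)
Your construction of $\mathbbm{pt}$ is essentially the paper's: the underlying category is $\mathsf{pt}(\cf)$ and the interior is induced by the evaluation pairing. But as stated it is incomplete: an interior operator must be a lex comonad on the category of \emph{small} copresheaves $\P(\mathsf{pt}(\cf))$, not merely a functor $\cf \to [\mathsf{pt}(\cf),\Set]$ (which is not even locally small). You need to check that each $ev^*(e)$ preserves directed colimits, so that by accessibility of $\mathsf{pt}(\cf)$ it is determined by its restriction to a small subcategory and hence lands in $\P(\mathsf{pt}(\cf))$; only then does the Special Adjoint Functor Theorem (\Cref{aggiuntifacili}) give the right adjoint $ev_*$ and the lex comonad $ev^*ev_*$ that serves as the interior, and boundedness is then verified against the criterion \Cref{critboundedionad}(3) using a site for $\cf$, not just ``standard accessibility theorems.'' You flag this as the hard part, which is fair, but the density-comonad/right-adjoint step is missing, not merely deferred.

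The more serious gap is in the adjunction argument. A 1-cell $\cx \to \mathbbm{pt}(\cf)$ is \emph{not} a pair $(f_0,\hat f)$ with $\hat f:\mathbb{O}(\cx)\to\cf$ a geometric morphism: by definition the lift $f^\sharp$ goes from $\mathbb{O}(\mathbbm{pt}(\cf))$, the category of coalgebras of $ev^*ev_*$, to $\mathbb{O}(\cx)$. Identifying $\mathbb{O}(\mathbbm{pt}(\cf))$ with $\cf$ is exactly the statement that $\cf$ has enough points (\Cref{idempotencyisbellcategorified}), which fails for general topoi — e.g.\ any topos without points has $\mathbb{O}(\mathbbm{pt}(\cf))$ trivial while $\cf$ is not — so the correspondence cannot be a tautological ``bijection on 1-cells,'' and indeed the adjunction is a biadjunction witnessed by equivalences of hom-categories, not bijections. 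The actual work is to produce the counit $\rho:\mathbb{O}\mathbbm{pt}(\cf)\to\cf$ via the universal property of the coalgebra category (lifting $ev^*$ along the forgetful functor), the unit $\lambda$ via the morphism-of-ionads generator \Cref{morphism of ionads generator} applied to the basis $ev^*$, and then to verify that the induced composites on hom-categories are pseudo-inverse; your observation that $f_0(x)$ is recovered by evaluating the underlying copresheaf of $\hat f^*(-)$ at $x$ is the right ingredient for one direction of the transposition, but the round-trip verification is genuine content (the paper itself calls it involved) and is not ``forced'' by unpacking definitions.
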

    \end{tcolorbox}
  The left adjoint of this adjunction was found by Garner; in order to find a right adjoint, we must allow for large ionads (\textbf{\Cref{fromtopoitobion}}). Finally, we show that the analogy on which the paper is built is deeply motivated and we show how to recover the content of the first section from the following ones (\textbf{\Cref{recovertopology}}).
  \item[\S \ref{soberenough}] We study the the categorified version of Isbell duality. This amounts to the notion of sober ionad and topos with enough points. Building on the \textbf{idempotency} of the categorified Isbell adjunction \textbf{(\Cref{idempotencyisbellcategorified})}, we derive properties of the Scott adjunction and we describe those topoi for which the counit of the Scott adjunction is an equivalence of categories (\textbf{\Cref{scottidempotency}}).
  %\item[Sec. \Cref{isbelltoscottcategorified}]  
  %\item[Sec. \Cref{interaction}] 
  %\item[Sec. \Cref{kappaIsbell}] In the last section we provide an expected generalization of the second one to $\kappa$-topoi and $\kappa$-ionads.
  % \item[Sec. \Cref{topremarks}] We discuss connected ionads, possible categorifications of the notion of closed set, and we show that ultracategories are ionads.
  \end{enumerate}
\end{structure*}

\subsection*{Notations and conventions} \label{backgroundnotations}

Most of the notation will be introduced when needed and we will try to make it as natural and intuitive as possible, but we would like to settle some notation.
\begin{enumerate}
\item $\ca, \cb$ will always be accessible categories, very possibly with directed colimits.
\item $\cx, \cy$ will always be ionads.
\item $\mathsf{Ind}_\lambda$ is the free completion under $\lambda$-directed colimits.
\item $\ca_{\kappa}$ is the full subcategory of $\kappa$-presentable objects of $\ca$.
\item $\cg, \ct, \cf, \ce$ will be Grothendieck topoi.
\item In general, $C$ is used to indicate small categories.
\item $\eta$ is the unit of the Scott adjunction.
\item $\epsilon$ is the counit of the Scott adjunction.
\item A Scott topos is a topos of the form $\mathsf{S}(\ca)$.
\item  $\P(X)$ is the category of small copresheaves of $X$.
\end{enumerate}

\begin{notation}[Presentation of a topos]\label{presentation of topos}
A presentation of a topos $\cg$ is the data of a  geometric embedding into a presheaf topos $f^*: \Set^{C} \leftrightarrows \cg :  f_*$. This means precisely that there is a suitable topology $\tau_f$ on $C$ that turns $\cg$ into the category of sheaves over $\tau$; in this sense $f$ \textit{presents} the topos as the category of sheaves over the site $(C, \tau_f)$. 
\end{notation}

\section{Spaces, locales and posets} \label{topology}

Our topological safari will start from the celebrated adjunction between locales and topological spaces. This adjunction is often attributed to Isbell, as \cite{10.2307/24490585} popularized this result to a broader audience, but one should point out that a \textit{fair} attribution of this result is essentially impossible to provide\footnote{An in-depth analysis of the historical evolution of the theory of locales can be found in \cite{Johnstone2001}.}. In \cite{10.2307/24490585}, Isbell credits the Paperts \cite{SE_1957-1958__1__A1_0}, Bénabou \cite{SE_1957-1958__1__A2_0} and Ehresmann \cite{Ehresmann1958GattungenVL}. In category theory, this attribution is especially unfortunate as the name \textit{Isbell duality} is already taken by the dual adjunction between presheaves and co-presheaves; this sometimes leads to some terminological confusion. The two adjunctions are similar in spirit, but do not generalize, at least not apparently, in any common framework. This first subsection is mainly expository and we encourage the interested reader to consult \cite{johnstone1986stone} and \cite[Chap. IX]{sheavesingeometry} for an extensive introduction. The aim of the subsection is not to introduce the reader to these results and objects, it is instead to organize them in a way that is useful to our purposes. More precise references will be given within the section.

\subsection{Spaces, Locales and Posets} \label{topologyintro}
This subsection tells the story of the diagram below. Let us bring every character to the stage.

\begin{center}
\begin{tikzcd}
                                                                      & \text{Loc} \arrow[lddd, "\mathbbm{pt}" description, bend left=12] \arrow[rddd, "\mathsf{pt}" description, bend left=12] &                                                                                                                 \\
                                                                      &                                                                                                                  &                                                                                                                 \\
                                                                      &                                                                                                                  &                                                                                                                 \\
\text{Top} \arrow[ruuu, "\mathcal{O}" description, dashed, bend left=12] &                                                                                                                  & \text{Pos}_{\omega} \arrow[luuu, "\mathsf{S}" description, dashed, bend left=12] \arrow[ll, "\mathsf{ST}", dashed]
\end{tikzcd}
\end{center}

% \begin{center}
% \begin{tikzcd}
%                                                               & \text{Loc} &                                                                                                 \\
%                                                               &                                                                                                                  &                                                                                                 \\
%                                                               &                                                                                                                  &                                                                                                 \\
% \text{Top} \arrow[ruuu, "\mathcal{O}" description, ] &                                                                                                                  & \text{Pos}_{\omega} \arrow[luuu, "\mathsf{S}" description, ] \arrow[ll, "\mathsf{ST}"]
% \end{tikzcd}
% \end{center}

%\begin{rem}[The categories] 
\begin{enumerate}
\item[]
\item[Loc] is the category of locales. It is defined to be the opposite category of frames, where objects are frames and morphisms are morphisms of frames.
\item[Top] is the category of topological spaces and continuous mappings between them.
\item[$\text{Pos}_{\omega}$] is the category of posets with directed suprema and functions preserving directed suprema.
\item[$\mathcal{O}$] associates to a topological space its frame of open sets and to each continuous function its inverse image.
\item[$\mathsf{ST}$] equips a poset with directed suprema with the Scott topology \cite[Chap. II, 1.9]{johnstone1986stone}.  This functor is fully faithful, i.e. a function is continuous with respect to the Scott topologies if and only if preserves suprema.
\item[$\mathsf{S}$] is the composition of the previous two; in particular the diagram above commutes.
\end{enumerate}
%\end{rem}

%\begin{rem}[The functors]
%The functors in the diagram above are well known to the general topologist; we proceed to a short description of them.
%\begin{enumerate}

%\end{enumerate}
%\end{rem}

%\begin{rem}[The Isbell duality and a posetal version of the Scott adjunction] \label{scottisbell}
Both the functors $\mathcal{O}$ and $\mathsf{S}$ have a right adjoint; we indicate them by $\mathsf{pt}$ and $\mathbbm{pt}$, which in both cases stands for \textit{points}. The forthcoming discussion will bring out the reason for this clash of names; indeed the two functors look alike. The adjunction on the left (of the diagram above) is \textit{Isbell duality} (see \citep[IX.1-3]{sheavesingeometry}), while the one on the right was not named yet to our knowledge and we will refer to it as the \textit{(posetal) Scott adjunction}. There exists a natural transformation $$\iota: \mathsf{ST} \circ \mathsf{pt} \Rightarrow \mathbbm{pt},$$ which will be completely evident from the description of the functors. 
%We will say more about $\iota$; for the moment we just want to clarify that there is no reason to believe (and indeed it would be a false belief) that $\iota$ is an isomorphism.
%\end{rem}

%\begin{rem}[The Isbell duality] \label{isbellposetal}
%An account of the adjunction $\mathcal{O} \dashv \mathbbm{pt}$ can be found in the very classical reference \citep[IX.1-3]{%sheavesingeometry}. 
%While the description of $\mathcal{O}$ is relatively simple, (it associates to a topological space $X$ its frame of opens $\mathcal{O}(X))$, $\mathsf{pt}$ is more complicated to define. It associates to a locale $L$ its sets of \textit{formal points} $\text{Loc}(\mathbb{T}, L)$\footnote{$\mathbb{T}$ is the boolean algebra $\{0 < 1\}$.} equipped with the topology whose open sets are defined as follows: for every $l$ in $L$ we pose, $$V(l):=\{p \in \text{Frm}(L,\mathbb{T}): p(l)=1\}.$$ 
%Further details may be found in \cite{sheavesingeometry}[IX.2]. In the literature, such an adjunction that is induced by a contravariant hom-functor is called schizophrenic. The special object in this case is $2$ that inhabits both the categories under the alias of Sierpiński space in the case of Top and the terminal object in the case of Frm. A detailed account on this topic can be found in \citep{sheavesingeometry}[IX.3, last paragraph].
%\end{rem}

\begin{defn}[The (posetal) Scott adjunction] \label{scottposetal}
To our knowledge, the adjunction $\mathsf{S} \dashv \mathsf{pt}$ does not appear explicitly in the literature. Let us give a description of both the functors involved.
\begin{itemize}
  \item[$\mathsf{pt}$] The underlying set of $\mathsf{pt}(L)$ is the same as for Isbell duality. Its posetal structure is inherited from $\mathbb{T} = \{0 < 1\}$; in fact $\mathsf{pt}(L) = \text{Frm}(L,\mathbb{T})$ has a natural poset structure with directed unions given by pointwise evaluation \cite[1.11]{Vickers2007}.
  \item[$\mathsf{S}$] Given a poset $P$, its Scott locale $\mathsf{S}(P)$ is defined by the frame $\text{Pos}_{\omega}(P, \mathbb{T})$. It's quite easy to show that this poset is a locale.
  \end{itemize}
Observe that also this adjunction is a dual one, and is induced by precisely the same object as for Isbell duality. 
\end{defn}

% \begin{exa}[Scott domains are kind of discrete spaces] \label{scottdomainsarediscrete}
% If $(P, \leq)$ is a Scott domain, $\mathsf{S}(P)$ coincides with the frame $2^{P_{\omega}}$, the power set of compact elements of $P$. Since  $\mathsf{S}(P)$ is a powerset, one might be tempted to think that the Scott topology over $P$ is discrete. This is not the case if one checks the definitions in detail. Instead, it means that the inclusion $(P_{\omega}, \mathsf{Disc}) \hookrightarrow \mathsf{ST}(P)$ is dense and moreover, \[\text{Top}( \mathsf{ST}(P), \cy) \cong \Set(P_{\omega}, \cy) \] for every space $\cy$. In this \textit{skew} sense a Scott domain is a kind of \textbf{discrete object} among posets with directed colimits.
% \end{exa}

% \begin{rem}[An unfortunate naming]
% There are several reasons for which we are not satisfied with the naming choices in this paper. Already in the topological case, Isbell duality (or adjunction) is a very much overloaded name, and later in the text we will categorify this adjunction, calling it \textit{categorified Isbell duality}, which propagates an unfortunate choice. Also the name of Scott for the Scott adjunction is not completely proper, because he introduced the Scott topology on a set, providing the functor $\mathsf{ST}$. In the previous chapter, we called Scott adjunction the categorification of the posetal Scott adjunction in this section, propagating this incorrect attribution. Yet, we did not manage to find better options, and thus we will stick to these choices.
% \end{rem}

\subsection{Sober spaces and spatial locales}\label{topologysoberandspatial}

%The Isbell adjunction is a fascinating construction that in principle could be an equivalence of categories. \textit{Shouldn't a space be precisely the space of formal points of its locale of open sets?} 
As it is well known, it turns out that in full generality one cannot recover a space from the formal points of its frame of opens. Let us briefly recall the main results on this topic.

\begin{rem}[Unit and counit]\label{topologyisbellunitcounit}
Given a space $X$ the unit of the Isbell adjunction $\eta_X: X \to (\mathsf{pt} \circ \mathcal{O})(X)$ might not be injective. This is due to the fact that if two points $x,y$ in $X$ are such that $\mathsf{cl}(x) = \mathsf{cl}(y)$, then $\eta_X$ will confuse them.
\end{rem}

%\begin{rem}[Sober spaces and spatial locales] 
%In the classical literature about this adjunction people have introduced the notion of sober space and locale with enough points, these are precisely those objects on which the (co)unit is an iso. It turns out that even if $\eta_X$ is not always an iso $\eta_{\mathsf{pt}(L)}$ is always an iso and this characterizes those $\eta$ that are isomorphisms. An analogue result is true for the counit.
%\end{rem}

\begin{rem}[Idempotency]\label{topologyidempotency}
The Isbell adjunction $\mathcal{O} \dashv \mathsf{pt}$ is idempotent; this is proved in \citep[IX.3: Prop. 2, Prop. 3 and Cor. 4.]{sheavesingeometry}. It might look like this result has no qualitative meaning. Instead it means that given a locale $L$, the locale of opens sets of its points $\mathcal{O}\mathsf{pt}(L)$ is the best approximation of $L$ among spatial locales, namely those that are the locale of opens of a space. 

%The same observation is true for a space $X$ and the formal points of its locale of open sets $\mathsf{pt}\mathcal{O}(X)$. In the next two proposition we give a more categorical and more concrete incarnation of this remark.
\end{rem}

\begin{thm}[{\citep[IX.3.3]{sheavesingeometry}}] The following are equivalent:
\begin{enumerate}
  \item $L$ has enough points;
  \item the counit $\epsilon_L : (\mathcal{O} \circ \mathsf{pt})(L) \to L$ is an isomorphism of locales;
  \item $L$ is the locale of open sets $\mathcal{O}(X)$ of some topological space $X$.
\end{enumerate}
\end{thm}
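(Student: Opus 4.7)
The plan is to establish the cycle of implications (2) $\Rightarrow$ (3) $\Rightarrow$ (1) $\Rightarrow$ (2), where the first two directions are immediate from the definitions and the third rests on the idempotency of the Isbell adjunction recalled in Remark~\ref{topologyidempotency}.

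The direction (2) $\Rightarrow$ (3) is cost-free: if $\epsilon_L$ is an isomorphism then $L \cong \mathcal{O}(\mathsf{pt}(L))$, so the space $X := \mathsf{pt}(L)$ witnesses the conclusion. For (3) $\Rightarrow$ (1) I would spell out that each $x \in X$ gives a frame morphism $p_x \colon \mathcal{O}(X) \to \mathbb{T}$ sending an open $U$ to $[x \in U]$, and that $x \mapsto p_x$ realises enough points, since distinct opens of a topological space are by definition separated by some point belonging to one and not the other.

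The heart of the argument is (1) $\Rightarrow$ (2). The key observation is that, under the frame-locale translation, the counit $\epsilon_L$ corresponds to the frame morphism $\phi \colon L \to \mathcal{O}\mathsf{pt}(L)$ sending $U$ to $\hat{U} := \{p \in \mathsf{pt}(L) : p(U) = 1\}$. Now the sets $\hat{U}$ form, by the very definition of the topology on $\mathsf{pt}(L)$, a basis closed under the frame operations, so $\phi$ is automatically surjective as a frame morphism; the hypothesis of enough points says exactly that $U \mapsto \hat{U}$ is additionally injective, and together these force $\phi$, hence $\epsilon_L$, to be an isomorphism. A more conceptual route is to invoke idempotency directly: $\mathcal{O} \circ \mathsf{pt}$ is an idempotent monad on $\mathrm{Loc}$ whose algebras are the spatial locales, so having enough points (being in the essential image of $\mathcal{O}$) coincides with the counit $\epsilon_L$ being an iso.

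The main obstacle I anticipate is purely bookkeeping: keeping track of the contravariance implicit in the identification $\mathrm{Loc} = \mathrm{Frm}^{\mathrm{op}}$ so that "surjectivity of $\phi$ as a frame map" and "$\epsilon_L$ being a monomorphism of locales" are correctly paired with the formal meaning of \emph{enough points}. Once this dictionary is set up, every step reduces to manipulating the canonical map $U \mapsto \hat{U}$, and no genuine difficulty remains.
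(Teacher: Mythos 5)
Your proof is correct. Note that the paper itself gives no argument for this statement---it is imported directly from \citep[IX.3.3]{sheavesingeometry}---and your cycle $(2)\Rightarrow(3)\Rightarrow(1)\Rightarrow(2)$, with the surjectivity-plus-injectivity analysis of the frame map $U \mapsto \hat{U}$ handling $(1)\Rightarrow(2)$, is exactly the standard argument from that source, while your alternative appeal to idempotency is the route the paper gestures at in \Cref{topologyidempotency}.
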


%\begin{thm}
%
%\end{thm}

% \begin{rem}\label{onesideoftheadj}
% The subcategory of locales with enough points is coreflective in the category of locales. Similarly, the subcategory of sober spaces is reflective in the category of spaces. This is not surprising, it's far form being true that any adjunction is idempotent, but it is easy to check that given an adjunction whose induced comonad is idempotent, so must be the induced monad (and vice-versa).
% \end{rem}

\subsection{From Isbell to Scott: topology}\label{topologyscottfromisbell}

\begin{rem}[Relating the Scott construction to the Isbell duality]
We anticipated that there exists a natural transformation $\iota: \mathsf{ST} \circ \mathsf{pt} \Rightarrow \mathbbm{pt}$.

\begin{center}
\begin{tikzcd}
           & \text{Loc} \arrow[lddd, "\mathbbm{pt}" description] \arrow[rddd, "\mathsf{pt}"{name=pt}, description] &                                               \\
           &                                                                                             &                                               \\
           &                                                                                             &                                               \\
\text{Top} \ar[Rightarrow, from=pt, shorten >=1pc, "\iota", shorten <=1pc]&                                                                                             & \text{Pos}_{\omega} \arrow[ll, "\mathsf{ST}"]
\end{tikzcd}
\end{center}
The natural transformation is pointwise given by the identity (for the underlying set is indeed the same), and witnessed by the fact that every Isbell-open is a Scott-open (\Cref{scottposetal}). This observation is implicit in \cite[II, 1.8]{johnstone1986stone}.
\end{rem}

\begin{rem}[Scott is not always sober] \label{topologyscottnotsober}
In principle $\iota$ might be an isomorphism. Unfortunately it was shown by Johnstone in \cite{10.1007/BFb0089911} that some Scott-spaces are not sober. Since every space in the image of $\mathbbm{pt}$ is sober, $\iota$ cannot be an isomorphism at least in those cases.
 %Yet, Johnstone says in \cite{10.1007/BFb0089911} that he does not know any example of a complete lattice whose Scott topology is not sober. Thus it is natural to conjecture that when $\mathsf{pt}(L)$ is complete, then $\iota_L$ is an isomorphism. We will come back to this example later.
\end{rem}

\begin{rem}[Scott from Isbell] \label{topologyscottfromisbellthm}
Let us conclude with a version of {\citep[IX.3.3]{sheavesingeometry}} for the Scott adjunction. This has guided us in understanding the correct idempotency of the Scott adjunction.
\begin{thm}[Consequence of {\citep[IX.3.3]{sheavesingeometry}}] The following are equivalent:
\begin{enumerate}
  \item $L$ has enough points and $\iota$ is an isomorphism at $L$;
  \item the counit of the Scott adjunction is an isomorphism of locales at $L$.
\end{enumerate}
\end{thm}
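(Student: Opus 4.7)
The plan is to factor the Scott counit $\epsilon^{\mathsf{S}}_L \colon \mathsf{S}(\mathbbm{pt}(L)) \to L$ through the Isbell counit $\epsilon^{\mathcal{O}}_L \colon \mathcal{O}(\mathsf{pt}(L)) \to L$ by means of $\iota$, and then read off the equivalence from the previous theorem characterizing locales with enough points. Since $\mathsf{S} = \mathcal{O} \circ \mathsf{ST}$, applying $\mathcal{O}$ to the continuous map $\iota_L \colon \mathsf{ST}(\mathbbm{pt}(L)) \to \mathsf{pt}(L)$ produces a locale morphism $\mathcal{O}(\iota_L) \colon \mathsf{S}(\mathbbm{pt}(L)) \to \mathcal{O}(\mathsf{pt}(L))$. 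I would then verify, by unwinding frame maps on both sides (each counit sends $u \in L$ to the subset $\{p : p(u) = 1\}$, viewed either as a Scott-open or as an Isbell-open on the common underlying set), the triangle
$$\epsilon^{\mathsf{S}}_L \;=\; \epsilon^{\mathcal{O}}_L \circ \mathcal{O}(\iota_L).$$

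With this factorization in hand, the direction $(1) \Rightarrow (2)$ is immediate: if $L$ has enough points then $\epsilon^{\mathcal{O}}_L$ is an iso by the cited theorem, and if $\iota_L$ is an iso in $\text{Top}$ then $\mathcal{O}(\iota_L)$ is an iso in $\text{Loc}$, so their composite $\epsilon^{\mathsf{S}}_L$ is an iso. For $(2) \Rightarrow (1)$, I would first observe that $\epsilon^{\mathsf{S}}_L$ being an iso gives $L \cong \mathcal{O}(\mathsf{ST}(\mathbbm{pt}(L)))$ in the image of $\mathcal{O}$, so by the cited theorem $L$ has enough points and $\epsilon^{\mathcal{O}}_L$ is an iso; two-out-of-three applied to the factorization then forces $\mathcal{O}(\iota_L)$ to be an iso.

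The only delicate step, and what I expect to be the main obstacle, is promoting \emph{$\mathcal{O}(\iota_L)$ is an iso of locales} to \emph{$\iota_L$ is a homeomorphism of topological spaces}. The key observation is that the associated frame map is concretely the set-theoretic inclusion of Isbell-opens into Scott-opens on the common underlying set $\mathsf{pt}(L) = \mathbbm{pt}(L)$, and an isomorphism which is simultaneously a subset inclusion must be an equality. Hence the two topologies coincide and, since $\iota_L$ is by construction the identity on points, it is a homeomorphism, completing $(2) \Rightarrow (1)$.
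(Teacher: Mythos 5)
Your argument is correct and is essentially the paper's own (one-line) proof spelled out: everything reduces to \citep[IX.3.3]{sheavesingeometry} via the factorization $\epsilon^{\mathsf{S}}_L = \epsilon^{\mathcal{O}}_L \circ \mathcal{O}(\iota_L)$ coming from $\mathsf{S} = \mathcal{O} \circ \mathsf{ST}$, and your concrete last step (the frame map of $\mathcal{O}(\iota_L)$ is the inclusion of the Isbell topology into the Scott topology on the common set of points, so an isomorphism forces the two topologies to coincide, and $\iota_L$, being the identity on points, is then a homeomorphism) is precisely what the paper's terse appeal to the full faithfulness and concrete description of $\mathsf{ST}$ is meant to cover. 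The only correction needed is notational: you use $\mathsf{pt}$ for the space of points and $\mathbbm{pt}$ for the poset of points, which is the opposite of the paper's convention, so expressions such as $\mathsf{ST}(\mathbbm{pt}(L))$ should read $\mathsf{ST}(\mathsf{pt}(L))$.
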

\begin{proof}
It follows directly from {\citep[IX.3.3]{sheavesingeometry}} and the fact that $\mathsf{ST}$ is fully faithful.
\end{proof}
\end{rem}

\section{Revisiting Ionads} \label{backgroundionads}

Ionads were defined by Garner in \cite{ionads}, and to our knowledge that's all the literature available on the topic. His definition is designed to generalize the definition of topological space. Indeed a topological space $\cx$ is the data of a set (of points) and an interior operator, $$\text{Int}: 2^X \to 2^X.$$ Garner builds on the well known analogy between powerset and presheaf categories and extends the notion of interior operator to a presheaf category. The whole theory is extremely consistent with the expectations: while the poset of (co)algebras for the interior operator is the locale of open sets of a topological space, the category of coalgebras of a ionad is a topos, a natural categorification of the concept of locale.

\subsection{Garner's definitions}
\begin{defn}[Ionad]
An ionad $\cx = (X, \text{Int})$ is a set $X$ together with a comonad $\text{Int}: \Set^X \to \Set^X$ preserving finite limits.
\end{defn}

\begin{defn}[Category of opens of a ionad]
The category of opens $\mathbb{O}(\cx)$ of a ionad $\cx = (X, \text{Int})$ is the category of coalgebras of $\text{Int}$. We shall denote by $U_{\cx}$ the forgetful functor $U_\cx: \mathbb{O}(\cx) \to \Set^X$.  
\end{defn}

\begin{defn}[Morphism of Ionads]
A morphism of ionads $f: \cx \to \cy$ is a couple $(f, f^\sharp)$ where $f: X \to Y$ is a set function and $f^\sharp$ is a lift of $f^*$,
\begin{center}
\begin{tikzcd}
\mathbb{O}(\cy) \arrow[rr, "f^\sharp" description] \arrow[dd, "U_\cy" description] &  & \mathbb{O}(\cx) \arrow[dd, "U_\cx" description] \\
                                                                               &  &                                                  \\
\Set^Y \arrow[rr, "f^*" description]                                           &  & \Set^X                                          
\end{tikzcd}
\end{center}
\end{defn}

\begin{defn}[Specialization of morphism of ionads]
Given two morphism of ionads $f,g: \cx \to \cy$, a specialization of morphism of ionads $\alpha: f \Rightarrow g$ is a natural transformation between $f^\sharp$ and $g^\sharp$,
\begin{center}
\begin{tikzcd}
\mathbb{O}(\cy) \arrow[r, "f^\sharp" description, bend left=35, ""{name=U, below}]
\arrow[r,"g^\sharp" description, bend right=35, ""{name=D}]
& \mathbb{O}(\cx)
\arrow[Rightarrow, "\alpha" description, from=U, to=D] \end{tikzcd}
\end{center}
\end{defn}

\begin{defn}[$2$-category of Ionads]
The $2$-category of ionads has ionads as objects, morphism of ionads as $1$-cells and specializations as $2$-cells.
\end{defn}

\begin{defn}[Bounded Ionads]
A ionad $\cx$ is bounded if $\mathbb{O}(\cx)$ is a topos.
\end{defn}

\subsection{A generalization of Garner's ionads}
In his paper, Garner mentions that in giving the definition of ionad he could have chosen a category instead of a set \cite[Rem. 2.4]{ionads}. For technical reasons we cannot be content with Garner's original definition, thus we will allow ionads over a category (as opposed to sets), even a locally small (but possibly large) one. We will need this definition later in the text to establish a connection between ionads and topoi (and thus categorify Isbell adjunction).

% \begin{quote}[\cite{ionads}, Rem. 2.4]
% In the definition of ionad, we have chosen to have a mere \emph{set} of points, rather
% than a category of them. We do so for a number of reasons. The first is that this choice 
% mirrors most closely the definition of topological space, where we have a set, and not a 
% poset, of points. The second is that we would in fact obtain no extra generality by allowing 
% a category of points. We may see this analogy with the topological case, where to give an 
% interior operator on a poset of points $(X,\leq)$ is equally well to give a topology 
% $\mathcal{O}(X)$ on $X$ such that every open set is upwards-closed with respect to $\leq$.
% Similarly, to equip a small category $C$ with an interior comonad is equally well 
% to give an interior comonad on $X:=\mathrm{ob}C$ together with a factorization 
% of the forgetful functor $\mathbb{O}(X)\to\mathsf{Set}^{X}$ through the presheaf category 
% $\mathsf{Set}^C$; this is an easy consequence of Example 2.7 below. However, the 
% most compelling reason for not admitting a category of points is that, if we were to do so, 
% then adjunctions such as that between the category of ionads and the category of topological 
% spaces would no longer exist. Note that, although we do not allow a category of points, the 
% points of any (well-behaved) ionad bear nonetheless a canonical category structure -- 
% described in Definition 5.7 and Remark 5.9 below -- which may be understood as a generalization 
% of the specialization ordering on the points of a space.  
% \end{quote}

\begin{defn}[Pre-(co)limit {\cite[Def. 3.3]{presheaves}}]
By a pre-limit of a diagram $D$ is meant a set of cones such that every cone of $D$ factorizes through some of them (not uniquely in general). A category is pre-complete if every diagram has a pre-limit. Dual concept: pre-colimit and pre-cocomplete category.
\end{defn}

\begin{defn}[Generalized Ionads]
A generalized ionad $\cx = (X, \text{Int})$ is a locally small (but possibly large) pre-finitely cocomplete category $X$ together with a lex comonad $\text{Int}: \P(X) \to \P(X)$.
\end{defn}

\begin{ach}
We will always omit the adjective \textit{generalized}.
\end{ach}

We are well aware that the notion of generalized ionad seems a bit puzzling at first sight. \textit{Why isn't it just the data of a locally small category $X$ together with a lex comonad on $\Set^X$?} The answer to this question is a bit delicate, having both a technical and a conceptual aspect. In a nutshell, $\P(X)$ is a well-behaved full subcategory of $\Set^X$, while the existence of finite pre-colimits will ensure us that $\P(X)$ has finite limits. Let us dedicate some remarks to make these hints more precise.

\begin{defn}[On small (co)presheaves]
By $\P(X)$ we mean the full subcategory of $\Set^X$  made by small copresheaves over $X$, namely those functors $X \to \Set$ that are small colimits of corepresentables (in $\Set^X$). 
\end{defn}

This is a locally small category, as opposed to $\Set^X$ which might be locally large. The study of small presheaves $X^\circ \to \Set$ over a category $X$ is quite important with respect to the topic of free completions under limits and under colimits. Obviously, when $X$ is small, every presheaf is small. Given a category $X$, its category of small presheaves is usually indicated by $\cp(X)$, while $\cp^\sharp(X)$ is $\cp(X^\circ)^\circ$. 

The most updated account on the property of $\cp(X)$ is given by \cite{presheaves} and \cite{DAY2007651}. $\cp(X)$ is the free completion of $X$ under colimits, while $\cp^\sharp(X)$ is the free completion of $X$ under limits. Thus, $\P(X)$ is the free completion of $X^\circ$ under colimits. The following equation clarifies the relationship between $\cp, \P$ and $\cp^\sharp$, \[\cp^\sharp(X)^\circ = \P(X) = \cp(X^\circ).\]

In full generality $\cp(X)$ is not complete, nor it has any limit whatsoever. Yet, under some smallness condition most of the relevant properties of $\cp(X)$ remain true. Below we recall a good example of this behavior, and we address the reader to \cite{presheaves} for a for complete account.

\begin{prop}[{\cite[Cor. 3.8]{presheaves}}] \label{smallpreshcomplete}
$\cp(X)$ is (finitely) complete if and only if $X$ is (finitely) pre-complete\footnote{See {\cite[Def. 3.3]{presheaves}}.}.
\end{prop}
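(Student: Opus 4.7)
The plan is to use the Yoneda embedding $y : X \hookrightarrow \cp(X)$ together with the defining property of $\cp(X)$ as the free cocompletion of $X$, i.e.\ that every object of $\cp(X)$ is a small colimit of representables, and to translate finite limits on one side into finite pre-limits on the other.

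For the easier direction $(\Rightarrow)$, suppose $\cp(X)$ is finitely complete and let $D : J \to X$ be a finite diagram. Form the finite diagram $y\circ D$ in $\cp(X)$, let $L \in \cp(X)$ be its limit, and write $L = \operatorname{colim}_{k\in K} y(x_k)$ as a small colimit of representables. Composing the colimit injections $y(x_k) \to L$ with the limit projections $L \to y(Dj)$ and invoking fully-faithfulness of Yoneda yields, for every $k$, a cone $(x_k \to Dj)_{j\in J}$ on $D$ in $X$. I then claim that $\{(x_k \to Dj)_j\}_{k\in K}$ is a pre-limit of $D$: given any cone $(x \to Dj)_j$ in $X$, Yoneda turns it into a cone $(y(x) \to y(Dj))_j$, which factors through $L$; and since $L$ is the colimit of the $y(x_k)$, a morphism from a representable $y(x) \to L$ factors (by the co-Yoneda lemma) through some $y(x_k) \to L$ up to the colimit identifications. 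Chasing the diagram shows the original cone factors through $(x_k \to Dj)_j$ in $X$, which is exactly the pre-limit property.

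For the converse $(\Leftarrow)$, assume $X$ is finitely pre-complete and let $D : J \to \cp(X)$ be a finite diagram. Each $D_j$ can be written as a small colimit $D_j = \operatorname{colim}_{k\in K_j} y(a_{j,k})$. The idea is to build the limit of $D$ as a small colimit of representables indexed by pre-limit data in $X$. Concretely, a cone from $y(x)$ to $D$ unwinds through the colimit presentations to a compatible family of morphisms in $X$ from $x$ into tuples $(a_{j,k_j})_j$, i.e.\ a cone over a finite diagram in $X$; by finite pre-completeness such a cone factors through a pre-limit cone of that finite diagram. Assembling all these pre-limit cones, with their indexing data, as a coend/colimit of representables yields a small presheaf $L$ on $X$; the mate of the construction produces projections $L \to D_j$ and one checks directly that $L$ satisfies the universal property of the limit in $\cp(X)$.

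The main obstacle is the converse direction, specifically bookkeeping smallness. Finite limits in the ambient $\Set^{X^\circ}$ exist trivially but need not preserve the smallness condition defining $\cp(X)$, so one cannot simply borrow the limit from the larger functor category. The substance of the argument — and the reason finite pre-completeness of $X$ is the right hypothesis — lies in exhibiting the candidate limit as a bona fide \emph{small} colimit of representables, which forces the indexing to come from a finite pre-limit in $X$ and, conversely, shows pre-completeness suffices.
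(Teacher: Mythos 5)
First, a point of comparison: the paper gives no proof of this proposition at all — it is imported verbatim from the cited reference (Cor.~3.8 there) — so your argument has to stand on its own. Your direction $(\Rightarrow)$ does stand: writing the limit $L$ of $y\circ D$ in $\cp(X)$ as a small colimit of representables, noting that colimits in $\cp(X)$ are computed pointwise (it is closed under small colimits in $\Set^{X^\circ}$), so that any $y(x)\to L$ factors through some colimit injection $y(x_k)\to L$, and transporting along the faithful Yoneda embedding, you correctly obtain that the cones $(x_k\to Dj)_j$ form a pre-limit. This is the expected argument.

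The gap is in $(\Leftarrow)$, and it is twofold. (i) The reduction of a cone $y(x)\to D$ to ``a cone over a finite diagram in $X$'' does not work as stated: each leg $y(x)\to D_j=\colim_k y(a_{j,k})$ factors through some representable, but the connecting maps $D_j\to D_{j'}$ of the diagram do not restrict to maps among the chosen representables, and the chosen lifts are compatible only up to zig-zags in the colimits; there is no finite diagram in $X$ of which this family is a cone. The usual remedy is to treat finite diagrams of representables first and then pass to arbitrary small presheaves via universality of colimits (for products and pullbacks; equalizers need separate care), not by lifting cones levelwise. (ii) More seriously, even for a finite diagram $D\colon J\to X$ of representables, a pre-limit $\{c_k\}$ only yields a pointwise \emph{surjection} $S=\coprod_k y(c_k)\to L$, where $L(x)=\lim_j X(x,Dj)$ is the pointwise limit (and any limit in $\cp(X)$ must be this one, since evaluation at $x$ is $\cp(X)(y(x),-)$). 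Because factorizations through pre-limit cones are not unique, the object you ``assemble'' from the pre-limit cones is not $L$: maps from $y(x)$ into it record a chosen factorization rather than the cone itself, so the uniqueness clause of the universal property fails; $L$ is a proper quotient of your object, and a quotient of a small presheaf is not automatically small. This is exactly the smallness bookkeeping you identify as the main obstacle, and it is left unresolved. The standard repair: every epi of presheaves is regular, so $L\cong \operatorname{coeq}(S\times_L S \rightrightarrows S)$; the kernel pair $S\times_L S$ decomposes into pieces $y(c_k)\times_L y(c_{k'})$, each of which is the cone-presheaf of the finite diagram obtained from $D$ by adjoining the two cones $c_k,c_{k'}$, hence by finite pre-completeness admits an epi from a small coproduct of representables; replacing the kernel pair by this cover does not change the coequalizer, so $L$ is exhibited as a small colimit. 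Without this (or an equivalent) step, the $(\Leftarrow)$ direction is not proved.
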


\begin{cor}
If $X$ is finitely pre-cocomplete, then $\P(X)$ has finite limits.
\end{cor}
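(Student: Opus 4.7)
The plan is to obtain this as a direct dualization of Proposition \ref{smallpreshcomplete}, which is the substantive input from \cite{presheaves}. The corollary therefore does not require any new construction; the entire content is to recognize that the statement is the dual of the cited proposition applied to $X^\circ$.

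First I would unfold the identification recalled just above the statement, namely $\P(X) = \cp(X^\circ)$, which rewrites the conclusion as: $\cp(X^\circ)$ has finite limits. Next I would observe that the hypothesis translates cleanly under dualization: pre-colimits of a diagram $D\colon J \to X$ are exactly pre-limits of the opposite diagram $D^\circ\colon J^\circ \to X^\circ$, since the defining universal property (every cone factorizes through some member of the given set of cones, not uniquely) is self-dual. Hence $X$ being finitely pre-cocomplete is the same thing as $X^\circ$ being finitely pre-complete.

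Finally I would apply Proposition \ref{smallpreshcomplete} to $Y := X^\circ$ to conclude that $\cp(X^\circ)$ is finitely complete, which via the identification above is $\P(X)$. There is essentially no obstacle: the only point worth a sentence is that the notions of pre-limit and pre-colimit in Definition \ref{smallpreshcomplete} (and \cite[Def. 3.3]{presheaves}) are genuinely dual, so that the hypothesis on $X$ can be fed directly into the cited proposition after passing to the opposite category. No macro gymnastics are needed, and the proof is a one-liner once the identification $\P(X) = \cp(X^\circ)$ is invoked.
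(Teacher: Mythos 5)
Your proof is correct and matches the paper's (implicit) argument: the corollary is exactly the dual form of \Cref{smallpreshcomplete}, obtained via the identification $\P(X) = \cp(X^\circ)$ and the self-duality of the pre-(co)limit condition. Nothing further is needed.
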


These two results are the reason for which we need our categories to be pre-finitely cocomplete, indeed we need to discuss lex functors on $\P(X)$. The notion of pre-(co)limits is due to Freyd, but we prefer to refer to \cite{presheaves} because the paper is more readable. A precise understanding of the notion of pre-cocomplete category is actually not needed for our purposes, the following sufficient condition will be more than enough through the paper.

\begin{cor}[{\cite[Exa. 3.5 (b) and (c)]{presheaves}}]
If $X$ is small or it is accessible, then $\P(X)$ is complete.
\end{cor}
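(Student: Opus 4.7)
The plan is to deduce this directly from Proposition \ref{smallpreshcomplete} via the identity $\P(X) = \cp(X^\circ)$: applied to $X^\circ$, that proposition says $\P(X)$ is (finitely) complete iff $X^\circ$ is (finitely) pre-complete, which is the same as $X$ being pre-cocomplete. So the corollary reduces to the assertion that small categories and accessible categories are pre-cocomplete, meaning every small diagram $D : J \to X$ admits a small set of cocones through which every cocone under $D$ factors.

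For $X$ small, the verification is immediate. The class of all cocones under $D$ is already a set, since $X$ has a small object-class and small hom-sets, and each cocone factors trivially through itself; so the totality of cocones serves as a pre-colimit.

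For $X$ accessible, I would invoke the standard sharpness/uniformization for accessible categories to pick a regular cardinal $\lambda$ with three properties simultaneously: $X \simeq \Ind_\lambda(X_\lambda)$, the indexing category $J$ is $\lambda$-small, and every $D(j)$ is $\lambda$-presentable. The candidate pre-colimit is then the essentially small set of cocones $D \Rightarrow \Delta x$ with $x \in X_\lambda$. Given an arbitrary cocone $\gamma : D \Rightarrow \Delta c$, write $c = \colim_i x_i$ as a $\lambda$-directed colimit of $\lambda$-presentables. The $\lambda$-presentability of each $D(j)$ lets one factor every leg $\gamma_j$ through some $x_{i_j} \to c$, and the $\lambda$-directedness of the colimit combined with the $\lambda$-smallness of $J$ lets one gather these factorizations coherently at a single index $i_0$, producing a cocone of $D$ with vertex $x_{i_0} \in X_\lambda$ through which $\gamma$ factors.

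The main obstacle is precisely this coherent simultaneous lifting step in the accessible case: one has to arrange that not only the individual legs but also the naturality squares of $\gamma$ descend through a common $x_{i_0}$, which requires the careful interplay between $\lambda$-directedness and $\lambda$-smallness of $J$. Once the existence of a suitable $\lambda$ is in hand, the rest is a routine size-counting argument, and the whole verification is exactly the content of \cite[Exa. 3.5 (b),(c)]{presheaves}, on which the corollary directly relies.
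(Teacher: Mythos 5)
Your proposal is correct and follows the same route the paper (implicitly) takes: the corollary is exactly the preceding proposition applied to $X^\circ$, together with the fact that small and accessible categories are pre-cocomplete, which the paper simply outsources to \cite[Exa.~3.5 (b),(c)]{presheaves} rather than proving. Your direct verification of pre-cocompleteness in the accessible case (choosing $\lambda$ uniformly, factoring the legs through a $\lambda$-presentable stage, and using $\lambda$-directedness plus the $\lambda$-smallness of $J$ to enforce the cocone identities at a common index) is the standard argument behind that citation and is sound.
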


What must be understood is that being pre-complete, or pre-cocomplete should not be seen as a completeness-like property, instead it is much more like a smallness assumption.

\begin{exa}[Ionads are generalized ionads]
It is obvious from the previous discussion that a ionad is a generalized ionad.
\end{exa}

\begin{rem}[Small copresheaves vs copresheaves]
When $X$ is a finitely pre-cocomplete category, $\P(X)$ is an infinitary pretopos and finite limits are \textit{nice} in the sense that they can be computed in $\Set^X$. Being an infinitary pretopos, together with being the free completion under (small) colimits makes the conceptual analogy between $\P(X)$ and $2^X$ nice and tight, but there is also a technical reason to prefer small copresheaves to copresheaves.
\end{rem}

\begin{prop} \label{aggiuntifacili}
If $f^*: \cg \to \P(X)$ is a cocontinuous functor from a cocomplete, co-well powered category with a generating set, then it has a right adjoint $f_*$.
\end{prop}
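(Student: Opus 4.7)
The plan is to recognize this as an instance of the dual Special Adjoint Functor Theorem (SAFT). Recall the statement: a cocontinuous functor $F : \cc \to \cd$ between locally small categories admits a right adjoint whenever $\cc$ is cocomplete, co-well-powered, and possesses a small generating set. All the hypotheses on $\cg$ are exactly those in the statement, and the codomain $\P(X)$ is locally small by construction (its objects are, by definition, the small colimits of corepresentables in $\Set^X$, so the hom-sets are genuine sets). Thus SAFT applies and delivers the desired right adjoint $f_*$.

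Spelling out the representability form for concreteness: for each $Y \in \P(X)$ consider the functor
\[ H_Y \;:=\; \P(X)(f^*(-), Y) \;:\; \cg^{\op} \longrightarrow \Set. \]
This is well defined by local smallness of $\P(X)$, and it is continuous because $f^*$ preserves small colimits while the covariant hom $\P(X)(-, Y)$ sends colimits in $\P(X)$ to limits in $\Set$. The dual SAFT, in its representability form, then produces an object $f_*(Y) \in \cg$ and a natural isomorphism $\cg(-, f_*(Y)) \cong H_Y$. Naturality in $Y$ assembles these into a functor $f_*$ and an adjunction $f^* \dashv f_*$.

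There is essentially no obstacle, since SAFT is a black-box theorem whose hypotheses are verified by direct inspection. The only point worth flagging is that although $\P(X)$ need not enjoy strong completeness properties on the target side, this is irrelevant: SAFT only requires cocompleteness, co-well-poweredness, and a generating set on the \emph{source} $\cg$, together with local smallness of the target, both of which are in hand.
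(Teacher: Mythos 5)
Your proof is correct and follows the same route as the paper, which also simply invokes the (dual) Special Adjoint Functor Theorem, using local smallness of $\P(X)$ as the key point that makes the target admissible. Your spelled-out representability form adds detail but no new idea.
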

\begin{proof}
It follows from the Special Adjoint Functor Theorem.
\end{proof}

\begin{rem}
The result above allows to produce comonads on $\P(X)$ (just compose $f^*f_*$) and follows from the Adjoint Functor Theorem, and uses the fact that $\P(X)$ is locally small to stay in place \cite{DAY2007651}, as opposed to $\Set^X$ which would have generated size issues.
\end{rem}

\begin{ach}
$\P(X)$ is a (Grothendieck) topos if and only if $X$ is an essentially small category, thus in most of the examples of our interest $\P(X)$ will not be a Grothendieck topos. Yet, we feel free to use a part of the terminology from topos theory (geometric morphism, geometric surjection, geometric embedding), because it is an infinitary pretopos (and thus only lacks a generator to be a topos).
\end{ach}

\subsubsection{Morphisms of ionads}

We still need to solve one last size issue. We need to tweak the notion of morphism of ionad too. Indeed, Garner's definition mentions $\Set^X$ and we want our definition to be based only on the notion of $\P(X)$.

\begin{defn}[Morphism of (generalized) Ionads]
A morphism of (generalized) ionads $f: \cx \to \cy$ is a couple $(f, f^\sharp)$ where:
\begin{enumerate}
  \item $f: X \to Y$ is a functor,
  \item for all small copresheaves $g: Y \to \Set$, $f^*(g)$ is small,
  \item $f^\sharp$ is a lift of $f^*$,
  \begin{center}
\begin{tikzcd}
\mathbb{O}(\cy) \arrow[rr, "f^\sharp" description] \arrow[dd, "U_\cy" description] &  & \mathbb{O}(\cx) \arrow[dd, "U_\cx" description] \\
                                                                               &  &                                                  \\
\P(Y) \arrow[rr, "f^*" description]                                           &  & \P(X)                                          
\end{tikzcd}
\end{center}
\end{enumerate}
\end{defn}

Similarly to the discussion that we have provided for pre-(co)limits, we need a criterion to check that the condition (2) of this definition is automatically verified.

\begin{prop} \label{f*welldef}
Let $\ca \to \cb$ be an accessible functor between accessible categories, then $f^*$ maps small copresheaves to small copresheaves and thus the functor $f^*  : \P( \cb)\to \P( \ca)$ is well defined.
\end{prop}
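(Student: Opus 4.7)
The plan is to reduce the claim to verifying smallness of $\cb(b, f(-))$ for a single object $b \in \cb$, and then to deduce this smallness from the accessibility of the data. First, observe that precomposition by $f$ on full functor categories is cocontinuous, since colimits there are computed pointwise. Writing a small copresheaf $g \in \P(\cb)$ as a small colimit $g = \colim_i \cb(b_i, -)$ of corepresentables (which is possible essentially by definition) yields $f^*(g) = \colim_i \cb(b_i, f(-))$, so it suffices to show that each of the functors $\cb(b, f(-))$ is a small copresheaf on $\ca$, since small copresheaves are closed under small colimits.

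Second, I would choose a regular cardinal $\lambda$ large enough that $\ca$ and $\cb$ are both $\lambda$-accessible, $f$ is $\lambda$-accessible, and the chosen object $b$ is $\lambda$-presentable in $\cb$. Such a $\lambda$ exists because an accessible functor is $\mu$-accessible for a cofinal class of regular cardinals $\mu$, and every object of an accessible category is eventually presentable.

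Third, with this choice the composite $h := \cb(b, f(-)) : \ca \to \Set$ preserves $\lambda$-directed colimits: $\cb(b, -)$ does because $b$ is $\lambda$-presentable, and $f$ does by $\lambda$-accessibility. Since $\ca$ is $\lambda$-accessible, $h$ is then the pointwise left Kan extension of its restriction $h|_{\ca_\lambda}$ along the inclusion $i: \ca_\lambda \hookrightarrow \ca$, which by the coend formula expresses $h$ as
$$ h \;\cong\; \int^{a \in \ca_\lambda} h(a) \cdot \ca(a, -), $$
a small colimit of corepresentables indexed by the essentially small category $\ca_\lambda$. Hence $h \in \P(\ca)$, and combining with the first step gives $f^*(g) \in \P(\ca)$, proving the proposition.

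The main technical obstacle is Step 2: aligning the accessibility ranks of $\ca$, $\cb$, $f$, and the presentability rank of $b$ on a single $\lambda$. This relies on the sharp-inequality calculus for accessible categories and the observation that an accessible functor between accessible categories is $\mu$-accessible for a cofinal class of regular cardinals $\mu$. Everything after Step 2 is formal bookkeeping, and the proof of the proposition reduces to the clean observation that \emph{accessible copresheaves valued in $\Set$ on an accessible category are automatically small}.
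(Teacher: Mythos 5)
Your proof is correct, but it takes a more self-contained route than the paper. The paper's argument is a two-line citation: it invokes the identification $\P(\ca) \simeq \text{Acc}(\ca,\Set)$ of small copresheaves with accessible functors (Day--Lack, cited as \cite[Sec. 2]{DAY2007651}), so that smallness of $f^*(g) = g \circ f$ follows immediately from the fact that accessible functors between accessible categories compose. You instead avoid quoting that equivalence as a black box: you reduce to corepresentables via cocontinuity of precomposition, and then prove directly, by the uniformization of accessibility ranks and the restriction--left-Kan-extension (coend) formula over $\ca_\lambda$, that each accessible copresheaf $\cb(b, f(-))$ is small. In effect you re-prove the one implication of the Day--Lack identification that is actually needed (accessible $\Rightarrow$ small), while the paper's proof uses both implications (it also needs small $\Rightarrow$ accessible to get started, applied to $g$); your decomposition step sidesteps that direction at the cost of a slightly longer argument. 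What the paper's approach buys is brevity and a conceptual slogan ("small $=$ accessible" over an accessible base); what yours buys is independence from the cited equivalence and an explicit exhibition of $f^*(g)$ as a small colimit of corepresentables. The only point to be scrupulous about in your version is the one you flag yourself: the choice of a single $\lambda$ making $\ca$, $\cb$ $\lambda$-accessible, $f$ $\lambda$-accessible and $b$ $\lambda$-presentable requires the uniformization theorem for accessible functors (not merely "take $\lambda$ bigger"), since raising the index of accessibility is only available along a cofinal class of cardinals; as you state it, with that theorem in hand, the argument is complete.
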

\begin{proof}
Observe that for any accessible category $\ca$ we can give a different presentation for $\P(\ca)$ that is $\P(\ca) \simeq \text{Acc}(\ca, \Set)$, where by $\text{Acc}(\ca, \Set)$ we intend the category of all accessible functors \cite[Sec. 2]{DAY2007651}. Now let $g$ be a functor in $\text{Acc}(\cb, \Set)$. Since $f$ is accessible, $f^*(g) =g \circ f$ is accessible too\footnote{Accessible functors between accessible categories do compose.}, and thus lands in  $\text{Acc}(\ca, \Set)$, as desired.
\end{proof}

\subsection{Base of a ionad}

 In analogy with the notion of base for a topology, Garner defines the notion of base of a ionad \cite[Def. 3.1, Rem. 3.2]{ionads}. This notion will be a handy technical tool in the paper. Our definition is pretty much equivalent to Garner's one (up to the fact that we keep flexibility on the size of the base) and is designed to be easier to handle in our setting.

\begin{defn}[Base of a ionad]
Let $\cx = (X, \text{Int})$ be a ionad. We say that a flat functor $e: B \to \P(X)$ generates\footnote{This definition is just a bit different from Garner's original definition \cite{ionads}[Def. 3.1, Rem. 3.2]. We stress that in this definition, we allow for large basis.} the ionad if $\text{Int}$ is naturally isomorphic to the density comonad of $e$, \[\text{Int} \cong \lan_e e.\]
\end{defn}

\begin{exa}
The forgetful functor $U_\cx: \mathbb{O}(\cx) \to \P(X)$ is always a basis for the ionad $\cx$. This follows from the basic theory about density comonads: when $U_\cx$ is a left adjoint, its density comonad coincides with the comonad induced by its adjunction. This observation does not appear in \cite{ionads} because he only defined small bases, and it almost never happens that $\mathbb{O}(\cx)$ is a small category.
\end{exa}

In \cite[3.6, 3.7]{ionads}, the author lists three equivalent conditions for a ionad to be bounded. The conceptual one is obviously that the category of opens is a Grothendieck topos, while the other ones are more or less technical. In our treatment the equivalence between the three conditions would be false. But we have the following characterization.

\begin{prop}\label{critboundedionad}
A ionad $\cx = (X, \text{Int})$ is bounded if any of the following equivalent conditions is verified:
\begin{enumerate}
  \item $\mathbb{O}(\cx)$ is a topos.
  \item there exist a Grothendieck topos $\cg$ and a geometric surjection $f : \P(X) \twoheadrightarrow \cg$ such that $\text{Int} \cong f^*f_*$.
  \item there exist a Grothendieck topos $\cg$, a geometric surjection $f : \P(X) \twoheadrightarrow \cg$ and a flat functor $e: B \to \cg$ such that $f^*e$ generates the ionad.
\end{enumerate}
\end{prop}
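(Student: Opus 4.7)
The plan is to close the triangle $(1) \Rightarrow (2) \Rightarrow (3) \Rightarrow (1)$. The equivalence $(1) \Leftrightarrow (2)$ is essentially Beck's comonadicity theorem applied to geometric surjections. For $(1) \Rightarrow (2)$, the candidate surjection is the Eilenberg--Moore adjunction itself: take $\cg := \mathbb{O}(\cx) = \P(X)^{\text{Int}}$, with $f^*$ the forgetful functor $U$ to $\P(X)$ and $f_*$ the cofree coalgebra functor $F$. Because $\text{Int}$ is lex, $U$ creates finite limits; being a left adjoint it is cocontinuous; and it is conservative on coalgebras. So $f$ is a geometric surjection with $f^*f_* = UF = \text{Int}$. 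Conversely, any geometric surjection $f : \P(X) \twoheadrightarrow \cg$ has comonadic inverse image (\emph{Elephant} A4.2.7), giving $\cg \simeq \P(X)^{f^*f_*} = \P(X)^{\text{Int}} = \mathbb{O}(\cx)$, so $\mathbb{O}(\cx)$ is a topos.

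For $(2) \Rightarrow (3)$, pick any small dense flat functor $e : B \to \cg$; every Grothendieck topos admits one, e.g.\ by taking a small site presentation and composing Yoneda with sheafification. A direct coend computation, using cocontinuity of $f^*$ together with the adjunction $f^* \dashv f_*$ to re-index the colimit, gives
\[
\lan_{f^*e}(f^*e)(Y) \;\cong\; f^*(\lan_e e)(f_*Y),
\]
and density $\lan_e e \cong \id_\cg$ collapses the right-hand side to $f^*f_*(Y) = \text{Int}(Y)$. Hence $f^*e$ generates the ionad.

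The heart of the argument is $(3) \Rightarrow (1)$. The same coend computation, this time without assuming density, yields $\text{Int} \cong f^* M f_*$, where $M := \lan_e e$ is a lex comonad on $\cg$ (lex because $e$ is flat). The Eilenberg--Moore adjunction for $M$ is itself a geometric surjection $\alpha : \cg \twoheadrightarrow \cg^M$ with $\alpha^*\alpha_* = M$ (its inverse image is the forgetful, which is lex, cocontinuous, and conservative). Thus the composite $\alpha \circ f : \P(X) \twoheadrightarrow \cg^M$ is a geometric surjection with associated comonad $f^* M f_* = \text{Int}$, and Beck's theorem applied to this composite delivers $\cg^M \simeq \P(X)^{\text{Int}} = \mathbb{O}(\cx)$, establishing (1).

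The main obstacle is exactly in the final step: one needs $\cg^M$ to be a \emph{Grothendieck} topos, not merely elementary, which requires $M = \lan_e e$ to be an accessible comonad. In Garner's original small-base setting this is automatic, but in the generalized framework developed in this section, where bases may be large, the effective use of condition (3) implicitly rests on an accessibility assumption on $e$ — typically satisfied by taking $B$ small or at least accessible, in line with \Cref{f*welldef} and \Cref{aggiuntifacili}.
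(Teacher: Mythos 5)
Your proof follows essentially the same route as the paper's: the cycle $(1)\Rightarrow(2)\Rightarrow(3)\Rightarrow(1)$, with $(1)\Leftrightarrow(2)$ via comonadicity of geometric surjections (Eilenberg--Moore), $(2)\Rightarrow(3)$ by choosing a small dense generating flat functor $e$, and $(3)\Rightarrow(1)$ by passing to the coalgebras of the density comonad $\lan_e e$ on $\cg$ together with the computation $\text{Int}\cong \lan_{f^*e}(f^*e)\cong f^*(\lan_e e)f_*$. The size caveat you flag at the end (accessibility of $\lan_e e$ when the base $B$ is large, needed for its coalgebra category to be a Grothendieck rather than merely elementary topos) is likewise left implicit in the paper, which simply treats $\mathsf{coAlg}(\lan_e e)$ as a topos admitting a geometric surjection from $\cg$, so your observation is a fair remark about the shared argument rather than a divergence in method.
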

\begin{proof}
Clearly $(1)$ implies $(2)$. For the implication $(2) \Rightarrow (3)$, it's enough to choose $e: B \to \cg$ to be the inclusion of any generator of $\cg$. Let us discuss the implication $(3) \Rightarrow (1)$. Let $\ce$ be the category of coalgebras for the density comonad of $e$ and call $g: \cg \to \ce$ the geometric surjection induced by the comonad, (in particular $\lan_ee \cong g^*g_*$). We claim that $\ce \simeq \mathbb{O}(\cx)$. Invoking \cite{sheavesingeometry}[VII.4 Prop. 4] and because geometric surjections compose, we have $\ce \simeq \mathsf{coAlg}(f^*g^*g_*f_*)$. The thesis follows from the observation that \[\text{Int} \cong \lan_{f^*e} (f^*e) \cong \lan_{f^*}( \lan_e (f^*e)) \cong \lan_{f^*}(f^* \lan_e e) \cong f^*g^*g_*f_*.\] 
\end{proof}

\subsection{Generating morphisms of ionads}
In the paper, we will need a practical way to induce morphism of ionads. The following proposition does not appear in \cite{ionads} and will be our main \textit{morphism generator}. From the perspective of developing technical tool in the theory of ionads, this proposition has an interest in its own right. 

The proposition below categorifies a basic lemma in general topology: let $f: X \to Y$ be a function between topological spaces, and let $B_X$ and $B_Y$ be bases for the respective topologies. If $f^{-1}(B_Y) \subset B_X$, then $f$ is continuous. Our original proof has been simplified by Richard Garner during the reviewing process of the author's Ph.D thesis.

\begin{prop}[Generator of morphism of ionads] \label{morphism of ionads generator}
Let $\cx$ and $\cy$ be ionads, respectively generated by bases $e_X: B \to \P(X)$ and $e_Y: C \to \P(Y)$. Let $f: X \to Y$ a functor admitting a lift as in the diagram below.
\begin{center}
\begin{tikzcd}
C \arrow[dd, "e_Y" description] \arrow[rr, "f^\diamond" description, dashed] &  & B \arrow[dd, "e_X" description] \\
                                                                             &  &                                 \\
\P(Y) \arrow[rr, "f^*" description]                                         &  & \P(X)                         
\end{tikzcd}
\end{center}
Then $f$ induces a morphism of ionads $(f, f^\sharp)$.
\end{prop}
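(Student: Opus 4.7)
The plan is to construct $f^\sharp$ from a natural transformation $\lambda : f^* \circ \text{Int}_\cy \Rightarrow \text{Int}_\cx \circ f^*$ compatible with the counits and comultiplications of the two density comonads. Standard $2$-categorical comonad theory then supplies a unique lift $\mathbb{O}(\cy) \to \mathbb{O}(\cx)$ of $f^*$ along the forgetful functors, sending a coalgebra $(a, \alpha : a \to \text{Int}_\cy(a))$ to $(f^*(a),\, \lambda_a \circ f^*(\alpha))$; this is the required $f^\sharp$.

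To build $\lambda$, I would first exploit that $f^* : \P(Y) \to \P(X)$ is cocontinuous (it is the restriction functor $g \mapsto g \circ f$). Combined with $\text{Int}_\cy \cong \lan_{e_Y} e_Y$ and the hypothesis $f^* e_Y \cong e_X f^\diamond$, cocontinuity yields
\[
f^* \circ \text{Int}_\cy \;\cong\; \lan_{e_Y}(f^* \circ e_Y) \;\cong\; \lan_{e_Y}(e_X \circ f^\diamond).
\]
The universal property of the left Kan extension identifies natural transformations $\lan_{e_Y}(e_X f^\diamond) \Rightarrow \text{Int}_\cx \circ f^*$ with natural transformations $e_X f^\diamond \Rightarrow \text{Int}_\cx \circ f^* \circ e_Y \cong \text{Int}_\cx \circ e_X \circ f^\diamond$. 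I would then take $\lambda$ to correspond to the unit $e_X \Rightarrow \lan_{e_X}(e_X) \circ e_X = \text{Int}_\cx \circ e_X$ of the density Kan extension, whiskered on the right with $f^\diamond$.

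The remaining step, which I expect to be the main obstacle, is verifying that $\lambda$ respects the counits and comultiplications of $\text{Int}_\cy$ and $\text{Int}_\cx$. Both coherence squares should reduce, via the universal property of the left Kan extension together with the canonical description of the counit and comultiplication of a density comonad $\lan_e e$, to identities that become automatic after whiskering with $e_Y$. This bookkeeping is routine but not entirely trivial, and is presumably where Garner's simplification streamlines the original argument. Once $\lambda$ is confirmed to be a comonad morphism, the lift $f^\sharp$ is determined, $U_\cx \circ f^\sharp = f^* \circ U_\cy$ holds by construction, and $(f, f^\sharp)$ is the desired morphism of (generalized) ionads.
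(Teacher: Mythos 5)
Your construction is correct in substance, but it takes a genuinely different route from the paper. The paper does not build the comonad morphism at all: it invokes Garner's observation (\cite[Exa.~4.6]{ionads}) that a morphism of ionads $\cx \to \cy$ amounts to giving $f$ together with a lift of the composite $C \to \P(Y) \to \P(X)$ through $U_\cx$ \emph{on the basis only}; the required lift is then obtained by applying that same characterization to the identity of $\cx$ (which yields the canonical coalgebra structure $\eta_{e_X}\colon e_X \Rightarrow \mathrm{Int}_\cx\, e_X$ on basic opens) and precomposing with $f^\diamond$. You instead re-derive the content of Garner's lemma by hand: you produce the full colax comonad morphism $\lambda\colon f^*\,\mathrm{Int}_\cy \Rightarrow \mathrm{Int}_\cx\, f^*$ and then use the standard correspondence between such $\lambda$'s and lifts to coalgebra categories. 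The generating datum is the same in both proofs, namely $\eta_{e_X}$ whiskered with $f^\diamond$, and your coherence checks do go through as you predict: restricting along the unit $f^*\eta_{e_Y}$, the counit square reduces to $\varepsilon_\cx e_X \cdot \eta_{e_X} = \mathrm{id}$ and the comultiplication square to $\delta_\cx e_X \cdot \eta_{e_X} = \mathrm{Int}_\cx\eta_{e_X} \cdot \eta_{e_X}$ together with naturality of $\lambda$. What your route buys is self-containedness and an explicit comonad-morphism structure; what it costs is that you must justify the step $f^*\circ\lan_{e_Y}e_Y \cong \lan_{e_Y}(f^*e_Y)$. Cocontinuity of $f^*$ is enough only when the density Kan extension is pointwise (automatic for small bases, as in Garner's setting) or absolute (e.g.\ when $e_Y$ is a left adjoint, as for the basis $U_\cy$); in the generalized setting of this paper bases may be large, so this hypothesis should be stated explicitly --- this is precisely the point where the paper instead leans on Garner's result, which packages the same Kan-extension bookkeeping.
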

\begin{proof}
By the discussion in \cite[Exa. 4.6, diagram (6)]{ionads}, it is enough to provide a morphism as described in the diagram below.
  \begin{center}
  \begin{tikzcd}
  C \arrow[dd, "e_Y" description] \arrow[rr, "f'" description, dashed] &  & \mathbb{O}(\lan_{e_X} e_X) \arrow[dd, "\mathsf{U}_\cx" description] \\
                                                                             &  &                                 \\
  \P(Y) \arrow[rr, "f^*" description]                                         &  & \P(X)                        
  \end{tikzcd}
  \end{center}
Also, \cite{ionads}[Exa. 4.6] shows that giving a map of ionads $\cx \to \cy$ is the same of giving $f: X \to Y$ and a lift of $C \to \P(Y) {\to} \P(X)$ through $\mathbb{O}(X)$. Applying this to the identity map $\cx \to \cx$ we get a lift of $B \to \P(X)$ trough $\mathbb{O}(\lan_{e_X} e_X)$.  Now composing that with $C \to B$ gives the desired square.
  \begin{center}
  \begin{tikzcd}
  C \arrow[dd, "e_Y" description] \arrow[rr, "f^\diamond" description] \arrow[rrrr, "f'" description, dashed, bend left=15] &  & B \arrow[rr] \arrow[dd, "e_X"] &  & \mathbb{O}(\lan_{e_X} e_X) \arrow[lldd, "\mathsf{U}_\cx" description] \\
                                                                                                                       &  &                                &  &                                                                       \\
  \P(Y) \arrow[rr, "f^*" description]                                                                                   &  & \P(X)                          &  &                                                                      
  \end{tikzcd}
  \end{center}
\end{proof}

\section{Ionads, Topoi and Accessible categories } \label{categorification}

\subsection{Setting the stage}
Now we come to the $2$-dimensional counterpart of the previous section. As in the previous one, this section is dedicated to describing the properties of a diagram.
\begin{center}
\begin{tikzcd}
                                                              & \text{Topoi} &                                                                                                 \\
                                                              &                                                                                                                  &                                                                                                 \\
                                                              &                                                                                                                  &                                                                                                 \\
\text{BIon} \arrow[ruuu, "\mathbb{O}" description, ] &                                                                                                                  & \text{Acc}_{\omega} \arrow[luuu, "\mathsf{S}" description, ] \arrow[ll, "\mathsf{ST}"]
\end{tikzcd}
\end{center}

\begin{enumerate}
\item[]
\item[Topoi] is the $2$-category of topoi and geometric morphisms.
\item[BIon] is the $2$-category of (generalized) bounded ionads.
\item[$\text{Acc}_{\omega}$] is the $2$-category of accessible categories \cite[Chap. 2]{adamekrosicky94} with directed colimits and functors preserving them.
\end{enumerate}
%\end{rem}

%\subsubsection{The functors}

\begin{defn}[$\mathbb{O}$] \label{opensionad}
 Let us briefly recall the relevant definitions. A generalized bounded ionad $\cx = (X, \text{Int})$ is a (possibly large, locally small and finitely pre-cocomplete) category $X$ together with a comonad $\text{Int} : \P(X) \to \P(X)$ preserving finite limits whose category of coalgebras is a Grothendieck topos. $\mathbb{O}$ was described by Garner in \cite[Rem. 5.2]{ionads}, it maps a bounded ionad to its category of opens, that is the category of coalgebras for the interior operator.
\end{defn}

\begin{con}[$\mathsf{S}$] \label{defnS}

We recall the construction of $\mathsf{S}$ from \cite{simon} and \cite{thcat}. Let $\ca$ be an accessible category with directed colimits.  $\mathsf{S}(\ca)$ is defined as the category the category of functors preserving directed colimits into sets. \[\mathsf{S}(\ca) = \text{Acc}_{\omega}(\ca, \Set).\]
We will show in the next proposition that $\mathsf{S}(\ca)$ is a topos. Let $f: \ca \to \cb$ be a $1$-cell in $\text{Acc}_{\omega}$, the geometric morphism $\mathsf{S}f$ is defined by precomposition as described below.

\begin{center}
\begin{tikzcd}
\ca \arrow[dd, "f"] &  & \mathsf{S}\ca \arrow[dd, "f_*"{name=lower, description}, bend left=49] \\
                    &  &                                                \\
\cb                 &  & \mathsf{S}\cb \arrow[uu, "f^*"{name=upper, description}, bend left=49] 

 \ar[phantom, from=lower, to=upper, shorten >=1pc, "\dashv", shorten <=1pc]
\end{tikzcd}
\end{center}
$\mathsf{S}f = (f^* \dashv f_*)$ is defined as follows: $f^*$ is the precomposition functor $f^*(g) = g \circ f$. This is well defined because $f$ preserve directed colimits. $f^*$ is a functor preserving all colimits between locally presentable categories and thus has a right adjoint by the adjoint functor theorem, that we indicate with $f_*$. Observe that $f^*$ preserves finite limits because finite limits commute with directed colimits in $\Set$.
\end{con}

\begin{prop}\label{scottconstructionwelldefined}
 $\mathsf{S}(\ca)$ is a topos.
\end{prop}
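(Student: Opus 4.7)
My plan is to exhibit $\mathsf{S}(\ca)$ as an accessible, left-exact localization of a presheaf topos, and then invoke the standard characterization of Grothendieck topoi. I would begin by fixing a regular cardinal $\lambda$ such that $\ca$ is $\lambda$-accessible, so $\ca \simeq \mathsf{Ind}_\lambda(\ca_\lambda)$ with $C := \ca_\lambda$ essentially small. The universal property of $\mathsf{Ind}_\lambda$ then yields, via restriction along $C \hookrightarrow \ca$, an equivalence between the category $\text{Acc}_\lambda(\ca, \Set)$ of functors preserving $\lambda$-directed colimits and the presheaf topos $[C, \Set]$. Since $\mathsf{S}(\ca) = \text{Acc}_\omega(\ca, \Set) \subseteq \text{Acc}_\lambda(\ca, \Set)$, this embeds $\mathsf{S}(\ca)$ fully and faithfully into $[C, \Set]$.

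The next step is to pin down which $G \colon C \to \Set$ lie in the essential image. A presheaf $G$ belongs to $\mathsf{S}(\ca)$ exactly when its unique $\lambda$-directed-colimit-preserving extension $\tilde G \colon \ca \to \Set$ preserves all small directed colimits, not merely the $\lambda$-directed ones. Since every object of $\ca$ is a $\lambda$-directed colimit of objects of $C$, a Fubini-type argument reduces the test to the (essentially set-sized) family of directed diagrams in $\ca$ indexed by $\lambda$-small directed categories. Each such diagram determines a cocone in $[C, \Set]$ that $G$ is required to send to a colimit cocone in $\Set$, and collecting these conditions produces a Grothendieck topology $\tau$ on $C$ whose sheaves are precisely the objects of $\mathsf{S}(\ca)$.

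The main obstacle is verifying that the resulting localization is \emph{left exact}, i.e.\ that the sheafification reflector preserves finite limits, so that one obtains a lex-reflective subcategory and not merely a reflective one. The key observation is that $\mathsf{S}(\ca)$ is closed under pointwise finite limits in $[C, \Set]$: since finite limits commute with directed colimits in $\Set$, a finite limit of directed-colimit-preserving functors is again directed-colimit-preserving. This closure, together with accessibility of the embedding, ensures that the covering cocones assemble into a genuine Grothendieck topology rather than an arbitrary orthogonality class, and consequently that the associated sheaf reflector is lex. Therefore $\mathsf{S}(\ca)$ is a left-exact localization of the presheaf topos $[C, \Set]$, and hence a Grothendieck topos.
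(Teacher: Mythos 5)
Your strategy runs the adjunction in the wrong direction, and the step where you ``collect the conditions into a Grothendieck topology'' is exactly where it breaks. A sheaf subcategory of $\Set^{\ca_\lambda}$ is reflective, hence closed under \emph{all} small limits computed pointwise in $\Set^{\ca_\lambda}$; but the essential image of $\mathsf{S}(\ca)=\text{Acc}_\omega(\ca,\Set)$ under restriction to $\ca_\lambda$ is only closed under \emph{finite} limits, precisely because directed colimits in $\Set$ commute with finite limits but not with infinite products. Concretely, let $\ca$ be the $\omega_1$-accessible category of infinite sets and injections (it has directed colimits, computed as unions) and let $U\colon\ca\to\Set$ be the inclusion, which preserves directed colimits. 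The countable power of $U$ computed in $\Set^{\ca_{\omega_1}}$ corresponds, after taking the $\omega_1$-directed-colimit-preserving extension, to $B\mapsto B^{\mathbb{N}}$, and this functor fails to preserve the union of an $\omega$-chain $B_0\subset B_1\subset\cdots$ (choose a sequence meeting every $B_k\setminus B_{k-1}$). So $\mathsf{S}(\ca)$ is not closed under limits in $\Set^{\ca_\lambda}$, hence not reflective there via this embedding, and in particular not the category of sheaves for any topology on $\ca_\lambda$. Relatedly, the inference ``closure under finite limits plus accessibility forces the localization to be left exact'' is a non sequitur: any reflective subcategory whatsoever is closed under finite limits in the ambient category, and this says nothing about lexness of the reflector.

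The correct use of your (correct) observations is the dual one, and it is what the paper does: the inclusion $\text{Acc}_\omega(\ca,\Set)\hookrightarrow\text{Acc}_\lambda(\ca,\Set)\simeq\Set^{\ca_\lambda}$ preserves all colimits and finite limits, so by the (special) adjoint functor theorem it has a \emph{right} adjoint, exhibiting $\mathsf{S}(\ca)$ as a \emph{coreflective} subcategory of a presheaf topos whose induced comonad is left exact; since the category of coalgebras for a lex comonad on a Grothendieck topos is a Grothendieck topos (\cite[V.8 Thm. 4]{sheavesingeometry}) and $\mathsf{S}(\ca)$ is exactly that category of coalgebras, the result follows. The associated geometric morphism is a surjection from $\Set^{\ca_\lambda}$ onto $\mathsf{S}(\ca)$ (compare \Cref{enoughpoints}), not an inclusion of $\mathsf{S}(\ca)$ as a subtopos of $\Set^{\ca_\lambda}$; whether such induced morphisms are embeddings is precisely the delicate question the paper later isolates under the name of topological embeddings, so it cannot come for free here.
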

\begin{proof}[Sketch of Proof]
By definition $\ca$ is $\lambda$-accessible for some $\lambda$. Obviously $\text{Acc}_\omega(\ca, \Set)$ sits inside $\lambda\text{-Acc}(\ca, \Set)$. Recall that $\lambda\text{-Acc}(\ca, \Set)$ is equivalent to $\Set^{\ca_\lambda}$ by the restriction-Kan extension paradigm and the universal property of $\mathsf{Ind}_\lambda$-completion.  This inclusion $i: \text{Acc}_\omega(\ca, \Set) \hookrightarrow \Set^{\ca_\lambda}$, preserves all colimits and finite limits, this is easy to show and depends on one hand on how colimits are computed in this category of functors, and on the other hand on the fact that in $\Set$ directed colimits commute with finite limits. By the adjoint functor theorem, $\text{Acc}_\omega(\ca, \Set)$ amounts to a coreflective subcategory of a topos whose associated comonad is left exact. By \cite[V.8 Thm.4]{sheavesingeometry}, it is a topos.
\end{proof}

\begin{prop}\label{coreflective} 
$\mathsf{S}(\ca)$ is coreflective in $\P(\ca)$.
 \end{prop}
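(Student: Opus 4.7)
The strategy is to apply Proposition~\ref{aggiuntifacili} to the inclusion $i: \mathsf{S}(\ca) \hookrightarrow \P(\ca)$. First, one checks that $i$ is genuinely a functor: a functor preserving directed colimits is in particular accessible, hence a small copresheaf on $\ca$, so $\mathsf{S}(\ca) = \text{Acc}_\omega(\ca, \Set)$ sits as a full subcategory of $\P(\ca) \simeq \text{Acc}(\ca, \Set)$ (using the identification recalled in the proof of Proposition~\ref{f*welldef}).

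Now to invoke Proposition~\ref{aggiuntifacili} we must verify that $\mathsf{S}(\ca)$ is cocomplete, co-well-powered, has a generating set, and that $i$ preserves colimits. The first three conditions are free: by Proposition~\ref{scottconstructionwelldefined}, $\mathsf{S}(\ca)$ is a Grothendieck topos, and Grothendieck topoi satisfy all three. For cocontinuity of $i$, the key observation is that in both categories colimits are computed pointwise in $\Set$. On the $\P(\ca)$ side, this is the standard fact that colimits of accessible functors are accessible and computed pointwise. On the $\mathsf{S}(\ca)$ side, one checks that the pointwise colimit of a diagram $(F_j)_{j \in J}$ of directed-colimit-preserving functors still preserves directed colimits, because in $\Set$ directed colimits commute with arbitrary colimits; thus the pointwise colimit in $\text{Acc}(\ca, \Set)$ already lies in $\mathsf{S}(\ca)$ and serves as the colimit there. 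Hence $i$ preserves colimits, and Proposition~\ref{aggiuntifacili} produces the desired right adjoint, exhibiting $\mathsf{S}(\ca)$ as a coreflective subcategory of $\P(\ca)$.

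I do not expect any real obstacle; the only mild subtlety is that one is working with the large category $\P(\ca)$ rather than with a presheaf topos $\Set^{\ca_\lambda}$ as in Proposition~\ref{scottconstructionwelldefined}. This is precisely what Proposition~\ref{aggiuntifacili} (i.e.\ the Special Adjoint Functor Theorem for $\P(X)$) is designed to handle, so the argument reduces to a direct verification of its hypotheses.
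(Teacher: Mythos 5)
Your proposal is correct and follows essentially the same route as the paper: the inclusion lands in $\P(\ca)$ because directed-colimit-preserving functors are accessible, hence small, and the right adjoint is obtained from \Cref{aggiuntifacili} using that $\mathsf{S}(\ca)$ is a Grothendieck topos (\Cref{scottconstructionwelldefined}). The only difference is that you spell out the cocontinuity of the inclusion explicitly, whereas the paper relies on the corresponding observation already made in the proof of \Cref{scottconstructionwelldefined}.
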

 \begin{proof}
 There is an obvious inclusion of the Scott topos $\mathsf{S}(\ca)$ in the category of copresheaves $\Set^\ca$. As we have seen in the previous proposition, every functor preserving directed colimits has arity, and thus is small. It follows that the inclusion factors through the category of small copresheaves. The inclusion $i_\ca: \mathsf{S}(\ca) \to \P(\ca)$ has a right adjoint $r_\ca$, this follows from \Cref{scottconstructionwelldefined} and \Cref{aggiuntifacili}.
 % ; unfortunately this result is true for a rather technical argument. By a general result of Kelly, $i_\ca$ has a right adjoint if and only if $\lan_{i_\ca}(1_{\mathsf{S}(\ca)})$ exists and $i_\ca$ preserves it. Since $\mathsf{S}(\ca)$ is small cocomplete, if $\lan_{i_\ca}(1_{\mathsf{S}(\ca)})$ exists, it must be pointwise and thus $i$ will preserve it because it is a cocontinuous functor. Thus it is enough to prove that $\lan_{i_\ca}(1_{\mathsf{S}(\ca)})$ exists. Anyone would be tempted to apply \citep{borceux_1994}[3.7.2], unfortunately $\mathsf{S}(\ca)$ is not a small category. In order to cut down this size issue, we use the fact that $\mathsf{S}(\ca)$ is a topos and thus have a dense generator $j: G \to \mathsf{S}(\ca)$. Now, we observe that \[\lan_{i_\ca}(1_{\mathsf{S}(\ca)}) = \lan_{i_\ca}(\lan_{j}(j))= \lan_{i_\ca \circ j} j.\] Finally, on the latter left Kan extension we can apply \citep[3.7.2]{borceux_1994}, because $G$ is a small category.
 \end{proof}

\begin{con}[$\mathsf{ST}$] \label{categorifiedscotttopology}
 The construction is based on \Cref{defnS} and \Cref{coreflective}, we map $\ca$ to the bounded ionad $(\ca, r_\ca i_\ca)$, as described in \Cref{coreflective}. A functor $f: \ca \to \cb$ is sent to the morphism of ionads $(f,f^\sharp)$  below, where $f^\sharp$ coincides with the inverse image of $\mathsf{S}(f)$.

 \begin{center}
\begin{tikzcd}
\mathsf{S}(\cb) \arrow[dd] \arrow[rr, "f^\sharp" description, dashed] &  & \mathsf{S}(\ca) \arrow[dd] \\
                                                                      &  &                            \\
\P(\cb) \arrow[rr, "f^*" description]                                &  & \P(\ca)                  
\end{tikzcd}
 \end{center}
Notice that is the diagram above $f^*$ is well defined because of \Cref{f*welldef}.  Coherently with the previous section, it is quite easy to notice that $\mathsf{S} \cong  \mathbb{O} \circ \mathsf{ST}$. Let us show it, \[  \mathbb{O} \circ \mathsf{ST}(\ca) = \mathbb{O} ( \mathsf{ST}(\ca)) \stackrel{\ref{coreflective}}{=} \mathsf{coAlg}(r_\ca i_\ca)   \simeq \mathsf{S}(\ca).  \]
\end{con}

% \subsection{The Scott adjunction: definitions and constructions} \label{promedadescottadj}

% \begin{enumerate}
% \item we introduce the the Scott-adjunction [\Cref{promenade}]; we start from the solid statement, then we slowly proceed to comment it.
% \item we give a very short overview of the infinitary [\Cref{kappa}] and the relative [\Cref{relative}] version of the adjunction.
% \item we survey on a geometric [\Cref{geometric}] and logical [\Cref{logical}] interpretation of the adjunction;
% \item we provide a toolbox [\Cref{toolbox}] that studies the technical properties of the adjunction;
% \end{enumerate} 

\subsection{Points} \label{categorifiedpoints}

%\begin{rem}[Categorified Isbell duality and the Scott Adjunction] \label{categorifiedisbellscott}
As in their analogs, both the functors $\mathbb{O}$ and $\mathsf{S}$ have a right (bi)adjoint. We indicate them by $\mathsf{pt}$ and $\mathbbm{pt}$, which in both cases stands for points.

\begin{center}
\begin{tikzcd}
                                                                      & \text{Topoi} \arrow[lddd, "\mathbbm{pt}" description, bend left=12] \arrow[rddd, "\mathsf{pt}" description, bend left=12] &                                                                                                                 \\
                                                                      &                                                                                                                  &                                                                                                                 \\
                                                                      &                                                                                                                  &                                                                                                                 \\
\text{BIon} \arrow[ruuu, "\mathbb{O}" description, dashed, bend left=12] &                                                                                                                  & \text{Acc}_{\omega} \arrow[luuu, "\mathsf{S}" description, dashed, bend left=12] \arrow[ll, "\mathsf{ST}", dashed]
\end{tikzcd}
\end{center}
This sub-subsection will be mostly dedicated to the construction of $\mathbbm{pt}$ and to show that it is a right (bi)adjoint for $\mathbb{O}$. Let us mention though that there exists a natural functor $$ \iota: \mathsf{ST} \circ \mathsf{pt} \Rightarrow \mathbbm{pt} $$ which is not in general an equivalence of categories.
%\end{rem}

\begin{defn}[The $2$-functor $\mathsf{pt}$]\label{pt}
The functor of points $\mathsf{pt}$ belongs to the literature since quite some time, $\mathsf{pt}$ is the covariant hom functor $\text{Topoi}(\Set, - )$. It maps a Grothendieck topos $\cg$ to its category of points, \[\cg \mapsto \text{Cocontlex}(\cg, \Set).\]
Of course given a geometric morphism $f: \cg \to \ce$, we get an induced morphism $\mathsf{pt}(f): \mathsf{pt}(\cg) \to \mathsf{pt}{(\ce)}$ mapping $p^* \mapsto p^* \circ f^*$. The fact that $\text{Topoi}(\Set, \cg)$ is an accessible category with directed colimits appears in the classical reference by Borceux as \citep[Cor. 4.3.2]{borceux_19943}, while the fact that $\mathsf{pt}(f)$ preserves directed colimits follows trivially from the definition.
\end{defn}

\begin{thm}[{\citep{simon}[Prop. 2.3], \citep{thcat}[Thm. 2.1]} The Scott adjunction]\label{scottadj}
The $2$-functor of points $\mathsf{pt} :\text{Topoi} \to \text{Acc}_{\omega} $ has a left biadjoint $\mathsf{S}$, yielding the Scott biadjunction, $$\mathsf{S} : \text{Acc}_{\omega} \leftrightarrows \text{Topoi}: \mathsf{pt}. $$
\end{thm}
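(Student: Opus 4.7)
The plan is to establish the adjunction concretely by exhibiting the unit and transposing morphisms explicitly; as a conceptual sanity check, both sides of the desired equivalence $\text{Topoi}(\mathsf{S}(\ca), \cg) \simeq \text{Acc}_\omega(\ca, \mathsf{pt}(\cg))$ should identify with the category of bifunctors $\cg \times \ca \to \Set$ that are cocontinuous and lex in $\cg$ and $\omega$-accessible in $\ca$, with the hom-equivalence amounting to currying.

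First I would construct the unit $\eta_\ca \colon \ca \to \mathsf{pt}(\mathsf{S}(\ca))$ by evaluation, sending $a \mapsto \mathrm{ev}_a$ where $\mathrm{ev}_a(F) := F(a)$. To see that $\mathrm{ev}_a$ is a point of $\mathsf{S}(\ca)$, I would use that colimits and finite limits in $\mathsf{S}(\ca)$ are computed pointwise in $\P(\ca)$: the former because the inclusion $i_\ca$ is a left adjoint (\Cref{coreflective}), the latter from the proof of \Cref{scottconstructionwelldefined} combined with the commutation of directed colimits and finite limits in $\Set$. That $\eta_\ca$ preserves directed colimits is the dual observation: directed colimits in $\mathsf{pt}(\mathsf{S}(\ca))$ are pointwise in $\Set$, and each $F \in \mathsf{S}(\ca)$ preserves directed colimits by definition.

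Next, given $f \colon \ca \to \mathsf{pt}(\cg)$ in $\text{Acc}_{\omega}$, I would define the transpose $\tilde f \colon \mathsf{S}(\ca) \to \cg$ by prescribing its inverse image as
\[ \tilde f^*(g)(a) := f(a)^*(g). \]
Three things then need checking: (i) each $\tilde f^*(g)$ lies in $\mathsf{S}(\ca)$, which uses that $f$ is $\omega$-accessible and directed colimits in $\mathsf{pt}(\cg)$ are computed pointwise; (ii) $\tilde f^*$ is cocontinuous and lex, which reduces again to pointwise computation in $\mathsf{S}(\ca)$ and the corresponding properties of each $f(a)^*$; (iii) $\tilde f^*$ admits a right adjoint $\tilde f_*$, which follows from the Special Adjoint Functor Theorem since $\cg$ and $\mathsf{S}(\ca)$ are Grothendieck topoi, hence locally presentable. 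The compatibility $\mathsf{pt}(\tilde f) \circ \eta_\ca \cong f$ is immediate by unwinding: $(\mathsf{pt}(\tilde f)(\mathrm{ev}_a))(g) = \mathrm{ev}_a(\tilde f^*(g)) = f(a)^*(g)$.

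The main obstacle is uniqueness of the transpose up to isomorphism, and with it the upgrade to a genuine biadjunction. My argument would be that any $h \colon \mathsf{S}(\ca) \to \cg$ satisfying $\mathsf{pt}(h) \circ \eta_\ca \cong f$ must have $h^*(g)(a) = (\mathsf{pt}(h)(\mathrm{ev}_a))(g) \cong f(a)^*(g) = \tilde f^*(g)(a)$ naturally in $a$; since objects of $\mathsf{S}(\ca) \subseteq \P(\ca)$ are determined by their pointwise values, this already identifies $h^*$ with $\tilde f^*$ on objects, and naturality in $g$ together with functoriality promotes the identification to a natural isomorphism. The 2-categorical enhancement is then routine: every construction has been carried out up to natural isomorphism, and the coherence data transfer functorially through currying.
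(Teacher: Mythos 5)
Your proposal is correct and follows essentially the same route as the proof this paper relies on: the theorem is not reproved here but imported from \cite{simon} (Prop.~2.3) and \cite{thcat} (Thm.~2.1), where the biadjunction is likewise obtained from the evaluation unit $a \mapsto \mathrm{ev}_a$, the currying correspondence between lex cocontinuous functors $\cg \to \mathsf{S}(\ca)$ and directed-colimit-preserving functors $\ca \to \mathsf{pt}(\cg)$, right adjoints supplied by the adjoint functor theorem, and the fact that directed colimits of points and (co)limits in $\mathsf{S}(\ca)$ are computed pointwise. So your argument matches the standard proof of \Cref{scottadj}, including the brief treatment of the $2$-cell level, which is indeed routine.
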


\begin{con}[$\text{Topoi} \rightsquigarrow \text{Bion}$: every topos induces a generalized bounded ionad over its points]\label{categorified isbell construction of pt} \label{fromtopoitobion}
For a topos $\ce$, there exists a natural evaluation pairing $$\mathsf{ev}: \ce \times \mathsf{pt}(\ce) \to \Set,$$ mapping the couple $(e, p)$ to its evaluation $p^*(e)$. This construction preserves colimits and finite limits in the first coordinate, because $p^*$ is an inverse image functor. This means that its mate functor $ ev^*: \ce  \to \Set^{\mathsf{pt}(\ce)},$ preserves colimits and finite limits. Moreover $ev^*(e)$ preserves directed colimits for every $e \in \ce$. Indeed, \[ ev^*(e)(\colim p_i^*) \cong (\colim p_i^*)(e) \stackrel{(\star)}{\cong} \colim ((p_i^*)(e)) \cong  \colim (ev^*(e)(p_i^*)). \] where $(\star)$ is true because directed colimits of points are computed pointwise. Thus, since the category of points of a topos is always accessible (say $\lambda$-accessible) and $ev^*(e)$ preserves directed colimits, the value of $ev^*(e)$ is uniquely individuated by its restriction to $\mathsf{pt}(\ce)_\lambda$. Thus, $ev^*$ takes values in $\P(\mathsf{pt}(\ce))$.  By \Cref{aggiuntifacili}, $ev^*$ must have a right adjoint $ev_*$, and we get an adjunction, $$ev^* :\ce \leftrightarrows \P(\mathsf{pt}(\ce)): ev_*.$$ Since the left adjoint preserves finite limits, the comonad $ev^*ev_*$ is lex and thus induces a ionad over $\mathsf{pt}(\ce)$. This ionad is bounded, this follows from a careful analysis of the discussion above. Indeed $\Set^{\mathsf{pt}(\ce)_\lambda}$ is lex-coreflective in $\P(\mathsf{pt}(\ce))$ and  $S \to \ce \to \Set^{\mathsf{pt}(\ce)_\lambda} \leftrightarrows \P(\mathsf{pt}(\ce))$, where $S$ is a site of presentation of $\ce$, satisfies the hypotheses of  \Cref{critboundedionad}(3).
\end{con}

\begin{con}[$\text{Topoi} \rightsquigarrow \text{Bion}$: every geometric morphism induces a morphism of ionads]\label{categorified isbell construction of pt} 

Observe that given a geometric morphism $f: \ce \to \cf$, $\mathsf{pt}(f): \mathsf{pt}(\ce) \to \mathsf{pt}(\cf) $ induces a morphism of ionads $(\mathsf{pt}(f), \mathsf{pt}(f)^\sharp)$ between $\mathbbm{pt}(\ce)$ and $\mathbbm{pt}(\cf)$. In order to describe $\mathsf{pt}(f)^\sharp$, we invoke \Cref{morphism of ionads generator}[(a)]. Thus, it is enough to provide  a functor making the diagram below commutative (up to natural isomorphism). 
\begin{center}
\begin{tikzcd}
\cf \arrow[dd, "ev^*_{\cf}" description] \arrow[rr, "f^\diamond" description, dashed] &  & \ce \arrow[dd, "ev^*_{\ce}" description] \\
                                                                             &  &                                 \\
\P({\mathsf{pt}(\cf)}) \arrow[rr, "\mathsf{pt}(f)^*" description]                                         &  & \P({\mathsf{pt}(\ce))}                         
\end{tikzcd}
\end{center}
Indeed such a functor exists and coincides with the inverse image $f^*$ of the geometric morphism $f$. Notice that $\mathsf{pt}(f)^*$ is well defined because $\mathsf{pt}(f)$ is an accessible functor between accessible categories, thus we can apply \Cref{f*welldef}.
\end{con}

\begin{defn}[The $2$-functor $\mathbbm{pt}$] \label{categorified isbell defn of pt}
$\mathbbm{pt}(\ce)$ is defined to be the ionad $(\mathsf{pt}(\ce), ev^*ev_*)$, as described in the two previous constructions. 
\end{defn}

\begin{thm}[Categorified Isbell adjunction, $\mathbb{O} \dashv \mathbbm{pt} $]\label{categorified isbell adj thm}

$$ \mathbb{O}: \text{BIon} \leftrightarrows \text{Topoi}: \mathbbm{pt} $$
\end{thm}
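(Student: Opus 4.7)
The plan is to establish a natural equivalence of hom-categories
\[ \text{Topoi}(\mathbb{O}(\cx), \ce) \simeq \text{BIon}(\cx, \mathbbm{pt}(\ce)) \]
by exhibiting the unit $\eta_\cx \colon \cx \to \mathbbm{pt}\,\mathbb{O}(\cx)$ and counit $\epsilon_\ce \colon \mathbb{O}\,\mathbbm{pt}(\ce) \to \ce$ explicitly, then verifying the triangle identities.

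For the unit, I would define the underlying functor $X \to \mathsf{pt}(\mathbb{O}(\cx))$ by sending $x \in X$ to the point whose inverse image is the composite $\mathbb{O}(\cx) \xrightarrow{U_\cx} \P(X) \xrightarrow{\mathrm{ev}_x} \Set$. This is cocontinuous ($U_\cx$ is a left adjoint to the cofree coalgebra functor, and $\mathrm{ev}_x$ is both a left and right adjoint) and left exact (the interior comonad is lex, so $U_\cx$ creates finite limits). To promote this to a morphism of ionads, I invoke \Cref{morphism of ionads generator}, equipping $\cx$ with its canonical base $U_\cx \colon \mathbb{O}(\cx) \to \P(X)$ and $\mathbbm{pt}\,\mathbb{O}(\cx)$ with the base $ev^* \colon \mathbb{O}(\cx) \to \P(\mathsf{pt}(\mathbb{O}(\cx)))$ supplied by \Cref{fromtopoitobion}. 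The required lift $f^\diamond$ can be taken to be the identity on $\mathbb{O}(\cx)$; the outer square commutes by the tautology $(\eta_\cx^* \circ ev^*)(e)(x) = \mathrm{ev}_x^*(e) = U_\cx(e)(x)$.

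For the counit, $\mathbb{O}\,\mathbbm{pt}(\ce)$ is the topos of coalgebras for the lex comonad $ev^*ev_*$ on $\P(\mathsf{pt}(\ce))$, and the Eilenberg--Moore comparison yields a functor $\ce \to \mathbb{O}\,\mathbbm{pt}(\ce)$, $e \mapsto (ev^*(e), ev^*(\eta_e))$. This comparison is left exact (the forgetful $U_{\mathbbm{pt}(\ce)}$ creates finite limits and $ev^*$ is lex) and cocontinuous (the forgetful preserves colimits as a left adjoint, and $ev^*$ is cocontinuous). By \Cref{aggiuntifacili} it admits a right adjoint, hence is the inverse image of a geometric morphism $\epsilon_\ce$.

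With unit and counit in hand, the triangle identity $\mathbb{O}(\eta_\cx)^* \circ \epsilon_{\mathbb{O}(\cx)}^* \cong \mathrm{id}_{\mathbb{O}(\cx)}$ reduces to computing both sides through the conservative forgetful $U_\cx$, where they agree on the nose; the dual identity $\mathbbm{pt}(\epsilon_\ce) \circ \eta_{\mathbbm{pt}(\ce)} \cong \mathrm{id}_{\mathbbm{pt}(\ce)}$ is verified analogously at the level of the points $\mathsf{pt}(\ce) \to \mathsf{pt}(\mathbb{O}\,\mathbbm{pt}(\ce)) \to \mathsf{pt}(\ce)$ and on the base $ev^*$. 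The main obstacle I anticipate is careful bookkeeping between the two natural bases available on $\mathbbm{pt}(\ce)$---the full topos of coalgebras via $U_{\mathbbm{pt}(\ce)}$ versus the topos $\ce$ itself via $ev^*$---and the fact that the Eilenberg--Moore comparison is generally not an equivalence; this non-equivalence is precisely the phenomenon that will later distinguish topoi with enough points (\Cref{soberenough}). Two-functoriality in specializations and $2$-cells of geometric morphisms then follows routinely, because both unit and counit have been characterized by universal properties.
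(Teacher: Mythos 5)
Your proposal is correct and takes essentially the same route as the paper: your unit is exactly the paper's $\lambda$, with $\lambda(x)(s)=\mathsf{U}_\cx(s)(x)$, promoted to a morphism of ionads via \Cref{morphism of ionads generator} against the bases $U_\cx$ and $ev^*$, and your counit's inverse image is exactly the paper's $\rho^*$, namely the lift of $ev^*$ through the coalgebra forgetful functor given by the universal property of $\mathbb{O}\mathbbm{pt}(\ce)$ (the paper gets the right adjoint simply from cocontinuity and lexness into a Grothendieck topos, whereas \Cref{aggiuntifacili} strictly speaking concerns functors into $\P(X)$ --- a cosmetic difference, since both are instances of SAFT). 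The paper, like you, only sketches the coherence check, verifying one composite of the hom-category comparisons by pushing everything through the conservative $\mathsf{U}_\cx$, so your outline matches its level of detail as well as its structure.
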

\begin{proof}
We provide the unit and the counit of this adjunction. This means that we need to provide geometric morphisms $\rho: \mathbb{O}\mathbbm{pt}(\ce) \to \ce$ and morphisms of ionads $\lambda: \cx \to \mathbbm{pt}\mathbb{O} \cx $. Let's study the two problems separately.
\begin{itemize}
  \item[($\rho$)] As in the case of any geometric morphism, it is enough to provide the inverse image functor $\rho^*: \ce \to \mathbb{O}\mathbbm{pt}(\ce)$. Now, recall that the interior operator of $\mathbbm{pt}(\ce)$ is induced by the adjunction $ev^*: \ce \leftrightarrows \P({\mathsf{pt}(\ce)}): ev_*$ as described in the remark above. By the universal property of the category of coalgebras, the adjunction $\mathsf{U}: \mathbb{O}\mathbbm{pt}(\ce) \leftrightarrows \P({\mathsf{pt}}(\ce)): \mathsf{F}$ is terminal among those adjunctions that induce the comonad $ev^*ev_*$. This means that there exists a functor $\rho^*$ lifting $e^*$ along $\mathsf{U}$ as in the diagram below.

  \begin{center}
  \begin{tikzcd}
  \ce \arrow[rrdd, "ev^*" description] \arrow[rr, "\rho^*" description] &  & \mathbb{O}\mathbbm{pt}(\ce) \arrow[dd, "\mathsf{U}" description] \\
                                                                        &  &                                                                  \\
                                                                        &  & \P({\mathbbm{pt}(\ce)})                                        
  \end{tikzcd}
  \end{center}
  It is easy to show that $\rho^*$ is cocontinuous and preserves finite limits and is thus the inverse image functor of a geometric morphism $\rho: \mathbb{O}\mathbbm{pt}(\ce) \to \ce$ as desired.

  \item[($\lambda$)] Recall that a morphism of ionads $\lambda: \cx \to \mathbbm{pt}\mathbb{O} \cx $ is the data of a functor $\lambda: X \to \mathsf{pt}\mathbb{O}\cx$ together with a lifting $\lambda^\sharp: \mathbb{O}\cx\to \mathbb{O}\mathsf{pt}\mathbb{O}\cx.$ We only provide  $\lambda: X \to \mathsf{pt}\mathbb{O}\cx$, $\lambda^{\sharp}$ is induced by \Cref{morphism of ionads generator}. Indeed such a functor is the same of a functor $\lambda: X \to \text{Cocontlex}(\mathbb{O}\cx, \Set)$. Define, $$\lambda(x)(s) = (\mathsf{U}(s))(x).$$ From a more conceptual point of view, $\lambda$ is just given by the composition of the  functors, $$X \stackrel{\mathsf{eval}}{\longrightarrow} \text{Cocontlex}(\P(X), \Set) \stackrel{- \circ \mathsf{U}}{\longrightarrow} \text{Cocontlex}(\mathbb{O}\cx, \Set).$$
  \end{itemize}
To finish the proof we now show that the induced functors in the diamond below induce an equivalence of categories between \textit{left} and \textit{right}.

% https://q.uiver.app/?q=WzAsNCxbMCwxLCJcXHRleHR7Qklvbn0oXFxjeCwgXFxtYXRoYmJte3B0fVxcY2UpIl0sWzEsMiwiXFx0ZXh0e0JJb259KFxcbWF0aGJibXtwdH0gXFxtYXRoYmJ7T31cXGN4LCBcXG1hdGhiYm17cHR9XFxjZSkiXSxbMSwwLCJcXHRleHR7VG9wb2l9KFxcbWF0aGJie099XFxjeCwgXFxtYXRoYmJ7T31cXG1hdGhiYm17cHR9XFxjZSkiXSxbMiwxLCJcXHRleHR7VG9wb2l9KFxcbWF0aGJie099XFxjeCwgXFxjZSkiXSxbMCwyLCJcXG1hdGhiYntPfSIsMV0sWzIsMywiXFxyaG8gXFxjaXJjICgtKSIsMV0sWzMsMSwiXFxtYXRoYmJte3B0fSIsMV0sWzEsMCwiKC0pIFxcY2lyYyBcXGxhbWJkYSIsMV1d
\[\begin{tikzcd}[scale=0.5]
  & {\text{Topoi}(\mathbb{O}\cx, \mathbb{O}\mathbbm{pt}\ce)} \\
  {\text{BIon}(\cx, \mathbbm{pt}\ce)} && {\text{Topoi}(\mathbb{O}\cx, \ce)} \\
  & {\text{BIon}(\mathbbm{pt} \mathbb{O}\cx, \mathbbm{pt}\ce)}
  \arrow["{\mathbb{O}}"{description}, from=2-1, to=1-2]
  \arrow["{\rho \circ (-)}"{description}, from=1-2, to=2-3]
  \arrow["{\mathbbm{pt}}"{description}, from=2-3, to=3-2]
  \arrow["{(-) \circ \lambda}"{description}, from=3-2, to=2-1]
\end{tikzcd}\]

The proof of this is very involved but not particularly instructive, thus we will only show that one of the two composition is naturally isomorphic to the identity. To do so, let $g \in{\text{Topoi}(\mathbb{O}\cx, \ce)}$. We will show that when we proceed clockwise in the diagram above we land where we started from.  $\mathbbm{pt}(g) \circ \lambda$ is described by the diagram below.

% https://q.uiver.app/?q=WzAsNyxbMiwyLCJcXFAoXFxtYXRoc2Z7cHR9XFxtYXRoYmJ7T30gXFxjeCkiXSxbMCwyLCJcXFAoXFxtYXRoc2Z7cHR9IFxcY2UpIl0sWzQsMiwiXFxQIChcXGN4KSJdLFswLDAsIlxcbWF0aGJie099XFxtYXRoYmJte3B0fSBcXGNlIl0sWzIsMCwiXFxtYXRoYmJ7T31cXG1hdGhiYm17cHR9XFxtYXRoYmJ7T30gXFxjeCkiXSxbNCwwLCJcXG1hdGhiYntPfShcXGN4KSJdLFsyLDFdLFsxLDAsIlxcbWF0aHNme3B0fShnKV4qIiwxXSxbMCwyLCJcXGxhbWJkYV4qIiwxXSxbMywxLCJcXG1hdGhzZntVfV97XFxtYXRoYmJte3B0fSBcXGNlfSIsMV0sWzMsNCwiXFxtYXRoc2Z7cHR9KGcpXlxcc2hhcnAiLDFdLFs0LDUsIlxcbGFtYmRhXlxcc2hhcnAiLDFdLFs1LDIsIlxcbWF0aHNme1V9X1xcY3giLDFdLFs0LDAsIlxcbWF0aHNme1V9X3tcXG1hdGhiYm17cHR9XFxtYXRoYmJ7T30gXFxjeH0iLDFdXQ==
\[\begin{tikzcd}
  {\mathbb{O}\mathbbm{pt} \ce} && {\mathbb{O}\mathbbm{pt}\mathbb{O} \cx)} && {\mathbb{O}(\cx)} \\
  && {} \\
  {\P(\mathsf{pt} \ce)} && {\P(\mathsf{pt}\mathbb{O} \cx)} && {\P (\cx)}
  \arrow["{\mathsf{pt}(g)^*}"{description}, from=3-1, to=3-3]
  \arrow["{\lambda^*}"{description}, from=3-3, to=3-5]
  \arrow["{\mathsf{U}_{\mathbbm{pt} \ce}}"{description}, from=1-1, to=3-1]
  \arrow["{\mathsf{pt}(g)^\sharp}"{description}, from=1-1, to=1-3]
  \arrow["{\lambda^\sharp}"{description}, from=1-3, to=1-5]
  \arrow["{\mathsf{U}_\cx}"{description}, from=1-5, to=3-5]
  \arrow["{\mathsf{U}_{\mathbbm{pt}\mathbb{O} \cx}}"{description}, from=1-3, to=3-3]
\end{tikzcd}\]

Now notice that $\rho \circ \mathbb{O}(\mathbbm{pt}(g) \circ \lambda)$ selects the first arrow of the diagram above (this is the action of $\mathbb{O}$) and the composes it with $\rho^*$, as shown below.
% https://q.uiver.app/?q=WzAsOCxbNCwyLCJcXFAoXFxtYXRoc2Z7cHR9XFxtYXRoYmJ7T30gXFxjeCkiXSxbMiwyLCJcXFAoXFxtYXRoc2Z7cHR9IFxcY2UpIl0sWzYsMiwiXFxQIChcXGN4KSJdLFsyLDAsIlxcbWF0aGJie099XFxtYXRoYmJte3B0fSBcXGNlIl0sWzQsMCwiXFxtYXRoYmJ7T31cXG1hdGhiYm17cHR9XFxtYXRoYmJ7T30gXFxjeCkiXSxbNiwwLCJcXG1hdGhiYntPfShcXGN4KSJdLFs0LDFdLFswLDAsIlxcY2UiXSxbMSwwLCJcXG1hdGhzZntwdH0oZyleKiIsMV0sWzAsMiwiXFxsYW1iZGFeKiIsMV0sWzMsMSwiXFxtYXRoc2Z7VX1fe1xcbWF0aGJibXtwdH0gXFxjZX0iLDFdLFszLDQsIlxcbWF0aHNme3B0fShnKV5cXHNoYXJwIiwxXSxbNCw1LCJcXGxhbWJkYV5cXHNoYXJwIiwxXSxbNSwyLCJcXG1hdGhzZntVfV9cXGN4IiwxXSxbNCwwLCJcXG1hdGhzZntVfV97XFxtYXRoYmJte3B0fVxcbWF0aGJie099IFxcY3h9IiwxXSxbNywzLCJcXHJob14qIiwxXSxbNywxLCJldl4qX1xcY2UiLDFdXQ==
\[\begin{tikzcd}
  \ce && {\mathbb{O}\mathbbm{pt} \ce} && {\mathbb{O}\mathbbm{pt}\mathbb{O} \cx)} && {\mathbb{O}(\cx)} \\
  &&&& {} \\
  && {\P(\mathsf{pt} \ce)} && {\P(\mathsf{pt}\mathbb{O} \cx)} && {\P (\cx)}
  \arrow["{\mathsf{pt}(g)^*}"{description}, from=3-3, to=3-5]
  \arrow["{\lambda^*}"{description}, from=3-5, to=3-7]
  \arrow["{\mathsf{U}_{\mathbbm{pt} \ce}}"{description}, from=1-3, to=3-3]
  \arrow["{\mathsf{pt}(g)^\sharp}"{description}, from=1-3, to=1-5]
  \arrow["{\lambda^\sharp}"{description}, from=1-5, to=1-7]
  \arrow["{\mathsf{U}_\cx}"{description}, from=1-7, to=3-7]
  \arrow["{\mathsf{U}_{\mathbbm{pt}\mathbb{O} \cx}}"{description}, from=1-5, to=3-5]
  \arrow["{\rho^*}"{description}, from=1-1, to=1-3]
  \arrow["{ev^*_\ce}"{description}, from=1-1, to=3-3]
\end{tikzcd}\]

To finish the proof it is thus enough to show that the composition $\lambda^\sharp \circ \mathsf{pt}(g)^\sharp \circ \rho^*$ coincides with the original $g^*$ us to isomorphism. Since $\mathsf{U}_\cx$ is comonadic (thus conservative and full on isomorphisms), we can check it on the second row of the diagram above). It is thus enough to show that $\lambda^* \circ \mathsf{pt}(g)^* \circ ev^*_\ce$ coincides with the precomposition $g^*$. The latter is a tautology.
\end{proof}

\subsection{Interaction} \label{interaction}

In this short subsection we shall convince the reader that the posetal version of the Scott-Isbell story \textit{embeds} in the categorical one.

\begin{center}
% https://tikzcd.yichuanshen.de/#N4Igdg9gJgpgziAXAbVABwnAlgFyxMJZARgBoAGAXVJADcBDAGwFcYkQAdDnGADx2AAZCAGMAviDGl0mXPkIoyAJmp0mrdlx79gABUxiA+lwgBbGAHN6k6SAzY8BIuVLFVDFm0SdufAQBUINAkpGQd5IgBmCnd1Lx9tAKCILBDbezknFGiVGg8Nby0-YABBEXFjDjNLa1C7WUcFZCVXWM9NXx0AIQBJAhDVGCgLeCJQADMAJzMkFxAcCCRiOqmZxDmFpCUV6dMlmk3EbdtVvcRo+cXEABYdtYvDgFY7s+uDq+eT3dn3pAvGLBgeJQehwAAWQxAeTiHVM9BwYLg42AAGUwRIaIx6AAjGCMfThLIgQHYWA2CbfRBkS5IN4gAFA9gg8GQ6HtQocfA4ayYnF4gmZBTEsCktgvLa-RCPTGA4GgiFQKEgCH0RWIMDMRiMA70LCMdiQRm83H4hoRbyMGDjHBK8FYa2zMSUMRAA
\begin{tikzcd}[scale=0.5]
                              & \text{Loc} \arrow[dd] \arrow[ld] \arrow[rr, "\mathsf{Sh}" description, dashed] &             & \text{Topoi} \arrow[dd] \arrow[ld] \\
\text{Top} \arrow[rr, dashed] &                                                                                & \text{BIon} &                                    \\
                              & \text{Pos}_\omega \arrow[lu] \arrow[rr, "i" description, dashed]           &             & \text{Acc}_\omega \arrow[lu]      
\end{tikzcd}
\end{center}
We have no applications for this observation, thus we do not provide all the details that would require the description of an enormous amount of functors relating all the categories that we have mentioned. Yet, we show the easiest aspect of this phenomenon. Let us introduce and describe the following diagram,

\begin{center}

\begin{tikzcd}
\text{Loc} \arrow[rr, "\mathsf{Sh}" description] \arrow[dd, "\mathsf{pt}" description] &  & \text{Topoi} \arrow[dd, "\mathsf{pt}" description] \\
                                                                                       &  &                                                    \\
\text{Pos}_\omega \arrow[rr, "i" description]                                   &  & \text{Acc}_\omega                                 
\end{tikzcd}
\end{center}

\begin{rem}[$\mathsf{Sh}$ and $i$]
\begin{itemize}
  \item[]
  \item[$\mathsf{Sh}$] It is well known that the sheafification functor $\mathsf{Sh}: \text{Loc} \to \text{Topoi}$ establishes \text{Loc} as a full subcategory of $\text{Topoi}$ in a sense made precise in \cite{sheavesingeometry}[IX.5, Prop. 2 and 3].
  \item[$i$] This is very easy to describe. Indeed any poset with directed suprema is an accessible category with directed colimits and a function preserving directed suprema is precisely a functor preserving directed colimits.
  \end{itemize}
\end{rem}

\begin{prop} \label{recovertopology}
The diagram above commutes.
\end{prop}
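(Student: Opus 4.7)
The proof plan is to trace a locale $L$ around both paths of the square and exhibit a natural equivalence between the two composites. On the south-west path, $i\,\mathsf{pt}(L) = i\bigl(\mathrm{Frm}(L,\mathbb{T})\bigr)$ is the poset of frame maps into the Sierpi\'nski locale, viewed as a (thin) accessible category with directed suprema computed pointwise (see \Cref{scottposetal}). On the north-east path, $\mathsf{pt}(\mathsf{Sh}(L)) = \mathrm{Cocontlex}(\mathsf{Sh}(L),\Set)$ is the category of points of the localic topos $\mathsf{Sh}(L)$, with its canonical accessible structure from \Cref{pt}.

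The key input is the classical result recalled in \cite{sheavesingeometry}[IX.5, Prop.~2 and 3], namely that $\mathsf{Sh}$ is a fully faithful $2$-functor from $\text{Loc}$ into $\text{Topoi}$, identifying locales with localic topoi and locale maps with geometric morphisms between them (up to the order). Applying this with domain the terminal locale $\mathbbm{1}$ (whose sheaf topos is $\Set$) gives a natural equivalence
\[
\mathsf{pt}(\mathsf{Sh}(L)) \;=\; \text{Topoi}(\Set,\mathsf{Sh}(L)) \;\simeq\; \text{Loc}(\mathbbm{1},L) \;=\; \mathrm{Frm}(L,\mathbb{T}) \;=\; \mathsf{pt}(L),
\]
functorial in $L$. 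In particular the category of points of a localic topos is already a poset, and the order coincides with the pointwise order of frame maps into $\mathbb{T}$; this settles the comparison at the level of the underlying posets.

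What remains is to check that this identification is an isomorphism in $\text{Acc}_\omega$, i.e.\ that it preserves directed colimits. On the $\text{Topoi}$ side, directed colimits of points are computed pointwise, as used in \Cref{fromtopoitobion} (this is the property $(\star)$ of that construction, and ultimately rests on \citep[Cor.~4.3.2]{borceux_19943}). On the $\text{Loc}$ side, directed suprema in $\mathrm{Frm}(L,\mathbb{T})$ are by definition pointwise. Under the bijection above, a directed family of points of $\mathsf{Sh}(L)$ corresponds to a directed family of frame maps $L\to\mathbb{T}$, and on open sublocales both colimits evaluate to the same directed union in $\mathbb{T}$; hence the equivalence respects the Scott-style structure.

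The only genuine subtlety is to verify functoriality in a locale morphism $f\colon L\to L'$: one must check that the geometric morphism $\mathsf{Sh}(f)$ induces, under the bijection, precisely the directed-colimit-preserving map $\mathsf{pt}(f)\colon \mathsf{pt}(L)\to\mathsf{pt}(L')$ given by postcomposition. This is immediate from the construction of $\mathsf{Sh}(f)$, whose inverse image sends opens of $L'$ to opens of $L$ via the underlying frame map, and from the fact that a point $p\colon L\to\mathbb{T}$ is transported to $p\circ f^{-1}$ on both sides. Since all the comparisons are pointwise in nature, no further 2-categorical coherence data intervenes, and the resulting natural isomorphism $\mathsf{pt}\circ\mathsf{Sh} \cong i\circ\mathsf{pt}$ exhibits the commutativity of the square.
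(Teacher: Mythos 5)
Your argument is essentially the paper's own: both reduce the square to the full faithfulness of $\mathsf{Sh}$ from \cite[IX.5, Prop. 2 and 3]{sheavesingeometry}, identifying $\mathsf{pt}(\mathsf{Sh}(L)) = \text{Topoi}(\Set,\mathsf{Sh}(L))$ with $\text{Loc}(\mathbb{T},L) = \mathrm{Frm}(L,\mathbb{T}) = \mathsf{pt}(L)$. Your extra checks (pointwise directed colimits on both sides, naturality in $L$) are correct and simply spell out what the paper dismisses as tautological.
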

\begin{proof}
This is more or less tautological from the point of view of \cite[IX.5, Prop. 2 and 3]{sheavesingeometry}. In fact, $$\mathsf{pt}(L)=\text{Loc}(\mathbb{T}, L) \cong \text{Topoi}(\mathsf{Sh}(\mathbb{T}), \mathsf{Sh}(L)) \cong \mathsf{pt}(\mathsf{Sh}(L)).$$ 
\end{proof}

% \begin{rem}[Characters on the stage] 
% $\text{Acc}_{\omega}$ is the $2$-category of accessible categories with directed colimits, a $1$-cell is a functor preserving directed colimits, $2$-cells are natural transformations. $\text{Topoi}$ is the $2$-category of Grothendieck topoi. A $1$-cell is a geometric morphism and has the direction of the right adjoint. $2$-cells are natural transformation between left adjoints.
%  \end{rem}
 
%  \begin{rem}[$2$-categorical warnings]\label{ignorepseudo}
% Both $\text{Acc}_{\omega}$ and Topoi are  $2$-categories, but most of the time our intuition and our treatment of them will be $1$-categorical, we will essentially downgrade the adjunction to a $1$-adjunction where everything works \textit{up to equivalence of categories}. We feel free to use any classical result about $1$-adjunction, paying the price of decorating any statement with the correct use of the word \textit{pseudo}. For example, right adjoints preserve pseudo-limits, and pseudo-monomorphisms.
%  \end{rem}

 \section{Sober ionads and topoi with enough points} \label{soberenough}

In this section we show that the categorified Isbell adjunction is idempotent, providing a categorification of  \Cref{topologysoberandspatial}. The notion of sober ionad is a bit unsatisfactory and lacks an intrinsic description. Topoi with enough points have been studied very much in the literature. Let us give (or recall) the two definitions.

\begin{defn}[Sober ionad]
A ionad is sober if $\lambda$ is an equivalence of ionads.
\end{defn}

\begin{defn}[Topos with enough points]
A topos has enough points if the inverse image functors from all of its points are jointly conservative.
\end{defn}

 \begin{thm}[Idempotency of the categorified Isbell duality] \label{idempotencyisbellcategorified}
 The following are equivalent:
 \begin{enumerate}
  \item $\ce$ has enough points;
  \item $\rho: \mathbb{O}\mathbbm{pt}(\ce) \to \ce$ is an equivalence of categories;
  \item $\ce$ is equivalent to a topos of the form $\mathbb{O}(\cx)$ for some bounded ionad $\cx$.
 \end{enumerate}
 \end{thm}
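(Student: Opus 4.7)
The plan is a cyclic proof $(2) \Rightarrow (3) \Rightarrow (1) \Rightarrow (2)$, following the pattern of the posetal result recalled in \Cref{topologysoberandspatial}. The implication $(2) \Rightarrow (3)$ is immediate: one sets $\cx := \mathbbm{pt}(\ce)$, which is a bounded ionad by \Cref{categorified isbell defn of pt}, and the assumption then gives $\ce \simeq \mathbb{O}\mathbbm{pt}(\ce) = \mathbb{O}(\cx)$.

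For $(3) \Rightarrow (1)$, given $\cx = (X, \text{Int})$ bounded and an equivalence $\ce \simeq \mathbb{O}(\cx)$, I would produce a jointly conservative family of points of $\ce$ indexed by $X$. For each $x \in X$, consider the composite
\[ p_x^* \;:=\; \mathbb{O}(\cx) \xrightarrow{\mathsf{U}_\cx} \P(X) \xrightarrow{ev_x} \Set. \]
Both factors are lex (for $\mathsf{U}_\cx$ because finite limits of coalgebras of a lex comonad are created in $\P(X)$; for $ev_x$ pointwise) and cocontinuous (both are left adjoints: $\mathsf{U}_\cx$ to the cofree coalgebra functor, and $ev_x$ because evaluations in $\P(X)$ are corepresentable by the image of $x$ under the Yoneda embedding). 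Hence $p_x^*$ is a lex cocontinuous functor out of the Grothendieck topos $\mathbb{O}(\cx)$, and by the special adjoint functor theorem it admits a right adjoint, so defines a point $p_x : \Set \to \ce$. Joint conservativity is then immediate: if every $p_x^*(f)$ is an isomorphism, then $\mathsf{U}_\cx(f)$ is a pointwise isomorphism in $\P(X)$, hence an isomorphism, and since $\mathsf{U}_\cx$ is comonadic (thus conservative), $f$ itself is an isomorphism.

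The main content is $(1) \Rightarrow (2)$, which I would prove by invoking the dual of Beck's monadicity theorem applied to the adjunction $ev^* \dashv ev_* : \ce \leftrightarrows \P(\mathsf{pt}(\ce))$ produced in the construction of $\mathbbm{pt}$. By the proof of \Cref{categorified isbell adj thm}, $\rho^* : \ce \to \mathbb{O}\mathbbm{pt}(\ce)$ is precisely the Beck comparison functor for this adjunction, so $\rho^*$ is an equivalence if and only if $ev^*$ is comonadic. Comonadicity requires that $ev^*$ be conservative and preserve equalizers of $ev^*$-split parallel pairs; the equalizer condition is automatic, because $\ce$ is a topos (so all equalizers exist) and $ev^*$ is lex (so it preserves them all). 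Conservativity of $ev^*$, on the other hand, is exactly the statement that $\ce$ has enough points: isomorphisms in $\P(\mathsf{pt}(\ce))$ are detected pointwise, so $ev^*(f)$ is an isomorphism if and only if $p^*(f)$ is an isomorphism for every $p \in \mathsf{pt}(\ce)$. The main obstacle is to confirm that the Beck comparison functor matches $\rho^*$ on the nose and to verify that the split-equalizer hypothesis may genuinely be dropped; both reduce to unwinding the construction of $\rho$ in \Cref{categorified isbell adj thm} together with the fact that lex functors out of a topos automatically preserve all finite limits.
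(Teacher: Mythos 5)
Your proposal is correct and takes essentially the same route as the paper: the same cycle of implications, with $(1) \Rightarrow (2)$ handled by identifying $\rho^*$ with the Beck comparison functor for $ev^* \dashv ev_*$ and applying the comonadicity theorem, where lexness of $ev^*$ disposes of the split-equalizer condition and conservativity is exactly the statement that $\ce$ has enough points. The only divergence is that for $(3) \Rightarrow (1)$ you spell out the jointly conservative family of points $ev_x \circ \mathsf{U}_\cx$ explicitly, whereas the paper simply cites Garner's Remark 2.5, which contains precisely this argument.
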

 \begin{proof}
  \begin{enumerate}
    \item[] 
  \item[$(1) \Rightarrow (2)$] Going back to the definition of $\rho$ in \Cref{categorified isbell adj thm}, it's enough to show that $ev^*$ is comonadic. Since it preserves finite limits, it's enough to show that it is conservative to apply Beck's (co)monadicity theorem. Yet, that is just a reformulation of having \textit{enough points}.
  \item[$(2) \Rightarrow (3)$] Trivial.
  \item[$(3) \Rightarrow (1)$] \cite[Rem. 2.5]{ionads}.
 \end{enumerate}
 \end{proof}

\begin{thm}
 The following are equivalent:
 \begin{enumerate}
  \item $\cx$ is sober;
  \item $\cx$ is of the form $\mathbbm{pt}(\ce)$ for some topos $\ce$.
 \end{enumerate}
 \end{thm}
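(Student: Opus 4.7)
The implication $(1) \Rightarrow (2)$ is tautological: if $\cx$ is sober then $\lambda_\cx : \cx \to \mathbbm{pt}\mathbb{O}\cx$ is an equivalence of ionads, so $\cx$ is equivalent to $\mathbbm{pt}(\ce)$ with $\ce := \mathbb{O}\cx$. No work is needed beyond unpacking definitions.

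For $(2) \Rightarrow (1)$, the plan is to promote \Cref{idempotencyisbellcategorified} to a genuine idempotency statement for the $2$-adjunction $\mathbb{O} \dashv \mathbbm{pt}$ and then exploit the triangular identities. Concretely, for every bounded ionad $\cx$ the topos $\mathbb{O}\cx$ tautologically satisfies condition $(3)$ of \Cref{idempotencyisbellcategorified}, hence it has enough points and $\rho_{\mathbb{O}\cx}$ is an equivalence of topoi. In other words, the whiskered counit $\rho\mathbb{O}$ is a natural equivalence. This is the standard criterion ensuring that the $2$-adjunction $\mathbb{O} \dashv \mathbbm{pt}$ is idempotent, which formally implies the dual statement that the whiskered unit $\lambda\mathbbm{pt}$ is also a natural equivalence.

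Spelled out: the triangular identity $\mathbbm{pt}\rho \circ \lambda\mathbbm{pt} = 1_{\mathbbm{pt}}$, evaluated at a topos $\ce$, gives $\mathbbm{pt}(\rho_\ce) \circ \lambda_{\mathbbm{pt}\ce} = 1_{\mathbbm{pt}\ce}$, so $\lambda_{\mathbbm{pt}\ce}$ is always a split monomorphism. Applying $\mathbb{O}$ to it and using the other triangular identity $\rho_{\mathbb{O}\mathbbm{pt}\ce} \circ \mathbb{O}\lambda_{\mathbbm{pt}\ce} = 1$ together with the fact that $\rho_{\mathbb{O}\mathbbm{pt}\ce}$ is an equivalence (case above), one concludes that $\mathbb{O}\lambda_{\mathbbm{pt}\ce}$ is an equivalence. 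Since $\mathbb{O}$ is, on underlying categories, essentially the comonadicity construction, combined with the fact that $\mathsf{pt}(\lambda)$ is easily checked to be essentially surjective (the points of $\mathbb{O}\cx$ coming from $\cx$ are precisely what $\lambda$ picks out), one deduces $\lambda_{\mathbbm{pt}\ce}$ is itself an equivalence. Hence if $\cx \simeq \mathbbm{pt}(\ce)$ then $\lambda_\cx$ is an equivalence, i.e.\ $\cx$ is sober.

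The only subtlety, and what I would call the main technical point, is the passage from ``$\mathbb{O}\lambda_{\mathbbm{pt}\ce}$ is an equivalence of topoi'' to ``$\lambda_{\mathbbm{pt}\ce}$ is an equivalence of ionads.'' The cleanest way to organize this is to invoke the general fact that in an idempotent adjunction $F \dashv G$ the components $\eta_{GA}$ are always invertible, so one can simply cite the abstract characterization of idempotent adjunctions once the equivalence $\rho\mathbb{O}\simeq \mathrm{id}$ is in hand, avoiding any hands-on verification on underlying data.
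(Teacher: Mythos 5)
Your proposal is correct and follows essentially the same route as the paper: the paper's proof is precisely the observation that \Cref{idempotencyisbellcategorified} makes the (co)monad of the $2$-adjunction $\mathbb{O} \dashv \mathbbm{pt}$ idempotent, and then the abstract theory of idempotent adjunctions transfers this to the unit side, so that $\lambda_{\mathbbm{pt}\ce}$ is an equivalence for every topos $\ce$. Your intermediate hands-on discussion of $\mathbb{O}\lambda_{\mathbbm{pt}\ce}$ is unnecessary once you invoke that abstract characterization, which is exactly what the paper does (modulo the usual caveat, acknowledged there in a footnote, that ``invertible'' means equivalence in this $2$-categorical setting).
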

 \begin{proof}
 For any adjunction, it is enough to show that either the monad or the comonad is idempotent\footnote{Of course, since this is a $2$-adjunction, by idempotent we mean that the (co)multiplication is an \textit{equivalence} of categories.}, to obtain the same result for the other one.
 \end{proof}

 \subsection{From Isbell to Scott: categorified} \label{isbelltoscottcategorified}
 
 This section is a categorification of its analog  \Cref{topologyscottfromisbell} and shows how to infer results about the tightness of the Scott adjunction from the Isbell adjunction. We mentioned that there exists a natural transformation as described by the diagram below.

 \begin{center}
\begin{tikzcd}
           & \text{Topoi} \arrow[lddd, "\mathbbm{pt}" description] \arrow[rddd, "\mathsf{pt}"{name=pt}, description] &                                               \\
           &                                                                                             &                                               \\
           &                                                                                             &                                               \\
\text{BIon} \ar[Rightarrow, from=pt, shorten >=1pc, "\iota", shorten <=1pc]&                                                                                             & \text{Acc}_{\omega} \arrow[ll, "\mathsf{ST}"]
\end{tikzcd}
\end{center}

\begin{defn}[$\iota$]
Let $\epsilon : \mathsf{Spt} \Rightarrow 1$ be the counit of the Scott adjunction and call $\phi : \mathbb{O} \circ \mathsf{ST} \Rightarrow \mathsf{S}$ the isomorphism appearing in \Cref{categorifiedscotttopology}. Then $\iota$ is the mate of the (pseudo)-natural transformation below along the Isbell adjunction \[\mathbb{O} \circ \mathsf{ST} \circ \mathsf{pt} \stackrel{\phi_\mathsf{pt}}{\Longrightarrow}  \mathsf{Spt} \stackrel{\epsilon}{\Longrightarrow} 1.\] 
\end{defn}

\begin{rem}
Let us provide a more down-to-earth description of $\iota$. Spelling out the content of the diagram above, $\iota$ should be a morphism of ionads $$\iota:  \mathsf{ST} \mathsf{pt} (\ce) \to \mathbbm{pt}(\ce). $$
Recall that the underling category of these two ionads is $ \mathsf{pt} (\ce)$ in both cases.

We define $\iota:  \mathsf{ST} \mathsf{pt} (\ce) \to \mathbbm{pt}(\ce)$ to be the identity on the underlying categories, $\iota = (1_{\mathbbm{pt}(\ce)}, \iota^\sharp)$ while $\iota^\sharp$ is induced by the following assignment defined on the basis of the ionad $\ce \to \mathsf{Spt}(\ce)$, $$\iota^\sharp(x)(p) = p^*(x). $$

The reader might have noticed that $\iota^\sharp$ is precisely the counit of the Scott adjunction.
\end{rem}

\begin{thm}[From Isbell to Scott, cheap version] \label{cheapidempotencyscott} The following are equivalent:
\begin{enumerate}
\item $\ce$ has enough points and $\iota$ is an equivalence of ionads.
\item The counit of the Scott adjunction is an equivalence of categories.
\end{enumerate}
\end{thm}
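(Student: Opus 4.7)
The plan is to exploit the very definition of $\iota$ as a mate, turning the question about $\epsilon$ into a conjunction of questions about $\rho$ and $\mathbb{O}(\iota)$, both of which are controlled by the idempotency result \Cref{idempotencyisbellcategorified}.

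First I would unwind the mate correspondence under $\mathbb{O} \dashv \mathbbm{pt}$: since $\iota$ is defined as the mate of $\epsilon \circ \phi_{\mathsf{pt}}$, the standard bijection produces the identity of $2$-cells
\[ \epsilon \circ \phi_{\mathsf{pt}} \;=\; \rho \circ \mathbb{O}(\iota) \colon \mathbb{O}\mathsf{ST}\mathsf{pt}(\ce) \Rightarrow \ce. \]
Because $\phi$ is an isomorphism by \Cref{categorifiedscotttopology}, this rewrites as the factorization
\[ \epsilon_\ce \;=\; \rho_\ce \circ \mathbb{O}(\iota_\ce) \circ \phi^{-1}_{\mathsf{pt}(\ce)}, \]
in which the outermost factor is always an equivalence.

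For the direction $(1) \Rightarrow (2)$, I would note that if $\ce$ has enough points then $\rho_\ce$ is an equivalence by \Cref{idempotencyisbellcategorified}; and if $\iota_\ce$ is an equivalence of ionads then $\mathbb{O}(\iota_\ce)$ is an equivalence of topoi, since $\mathbb{O}$ is a $2$-functor. The factorization then exhibits $\epsilon_\ce$ as a composite of equivalences. For the converse $(2) \Rightarrow (1)$, if $\epsilon_\ce$ is an equivalence then via $\phi$ we obtain $\ce \simeq \mathbb{O}\mathsf{ST}\mathsf{pt}(\ce) = \mathbb{O}(\cx)$ for $\cx := \mathsf{ST}\mathsf{pt}(\ce)$, so the implication $(3) \Rightarrow (1)$ of \Cref{idempotencyisbellcategorified} yields that $\ce$ has enough points; hence $\rho_\ce$ is an equivalence, and a two-out-of-three argument applied to the factorization forces $\mathbb{O}(\iota_\ce)$ to be an equivalence as well. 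Because the underlying functor of $\iota_\ce$ is the identity on $\mathsf{pt}(\ce)$ (as recorded in the remark preceding the statement), the only substantive datum of $\iota_\ce$ is its lift $\iota_\ce^\sharp = \mathbb{O}(\iota_\ce)$, so $\iota_\ce$ is an equivalence in $\text{BIon}$.

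The main obstacle is the mate calculation itself: one must verify carefully that the chain $\mathbb{O}\mathsf{ST}\mathsf{pt} \xrightarrow{\mathbb{O}\iota} \mathbb{O}\mathbbm{pt} \xrightarrow{\rho} 1$ really agrees with $\epsilon \circ \phi_{\mathsf{pt}}$, which requires checking that the unit/counit data for $\mathbb{O} \dashv \mathbbm{pt}$ constructed in \Cref{categorified isbell adj thm} interact correctly with the $\phi$ isomorphism. Once that naturality identity is in hand, the rest is a direct two-out-of-three argument on equivalences.
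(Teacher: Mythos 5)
Your proposal is correct and takes essentially the same route the paper intends: its proof merely declares the equivalence ``completely obvious from the previous discussion,'' i.e.\ from the mate definition of $\iota$ (equivalently the factorization $\epsilon \cong \rho \circ \mathbb{O}(\iota) \circ \phi^{-1}_{\mathsf{pt}}$, with $\iota^\sharp$ identified with the Scott counit on the basis) combined with \Cref{idempotencyisbellcategorified}, which is exactly the two-out-of-three argument you spell out. The only step you gloss over --- that invertibility of $\mathbb{O}(\iota)$ gives invertibility of $\iota$ in $\text{BIon}$ because a quasi-inverse of $\iota^\sharp$ lifts the identity (up to isomorphism) --- is harmless and clearly within the paper's intended level of rigour.
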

\begin{proof}
This is completely obvious from the previous discussion.
\end{proof}

This result is quite disappointing in practice and we cannot accept it as it is. Yet, having understood that the Scott adjunction is not  the same as Isbell one was very important conceptual progress in order to guess the correct statement for the Scott adjunction.

\begin{rem}[The Scott adjunction is not a biequivalence: Fields] \label{fields}
In this remark we provide a topos $\cf$ such that the counit \[\epsilon_\cf : \mathsf{Spt} \cf \to \cf\] is not an equivalence of categories. Let $\cf$ be the classifying topos of the theory of geometric fields \cite{elephant2}[D3.1.11(b)]. Its category of points is the category of fields $\mathsf{Fld}$, since this category is finitely accessible the Scott topos $\mathsf{Spt}(\cf)$ is of presheaf type, \[\mathsf{Spt}(\cf) = \mathsf{S} (\mathsf{Fld} )      \simeq     \Set^{\mathsf{Fld}_\omega}.\]
It was shown in \cite{10.2307/30041767}[Cor 2.2] that $\cf$ cannot be of presheaf type, and thus $\epsilon_\cf$ cannot be an equivalence of categories.

% Chapter \ref{geometric} builds on a very strong analogy between the Scott adjunction and the locales-topological spaces adjunction. This remark shows that the analogy can be pushed only up to a certain point. $\cf$ is a coherent topos (because it classifies a coherent theory) and thus has enough points \cite{J1977}[7.44]. If the analogy was perfect, $\epsilon$ should be an equivalence of categories precisely for topoi with enough points. The last part of Chapter \ref{geometric} dedicates to understand what can be saved of this intuition and studies up to what extent the Scott adjunction is idempotent\footnote{Be careful, this example does not show that the Scott adjunction is not idempotent, at least not in this formulation, in fact we never proved that $\cf$ is a Scott topos by any means.}. We show that $\epsilon_\cf$ is a geometric surjection and discuss in further detail this example.
\end{rem}

In the next section we provide a more satisfactory version of \Cref{cheapidempotencyscott}. 

 \subsection{Covers}
 In order to provide a more satisfying version of \Cref{cheapidempotencyscott}, we need to introduce a tiny bit of technology, namely finitely accessible covers. For an accessible category with directed colimits $\ca$ a (finitely accessible) \textit{cover} $$\cl : \mathsf{Ind}(C) \to \ca $$ will be one of a class of cocontinuous (pseudo)epimorphisms (in Cat) having many good properties. They will be helpful for us in the discussion. Covers were introduced for the first time in \cite[4.5]{aec} and later used in \cite[2.5]{LB2014}.

 \begin{rem}[Generating covers] 
Let $\ca$ be $\kappa$-accessible and let us focus on the following diagram.

% https://q.uiver.app/?q=WzAsNixbMSwwLCJcXGNhX1xca2FwcGEiXSxbMCwxLCJcXHRleHR7SW5kfVxcY2FfXFxrYXBwYSJdLFsxLDEsIlxcY2EiXSxbMywwLCJcXGNhIl0sWzQsMSwiXFxjYSJdLFszLDEsIlxcdGV4dHtJbmR9XFxjYV9cXGthcHBhIl0sWzAsMiwiXFxpb3RhIl0sWzAsMSwiXFxhbHBoYSIsMl0sWzIsMSwiXFxjYShcXGlvdGEtLC0pIiwwLHsic3R5bGUiOnsiYm9keSI6eyJuYW1lIjoiZGFzaGVkIn19fV0sWzMsNCwiXFxpb3RhIl0sWzMsNSwiXFxhbHBoYSIsMl0sWzUsNCwiXFxsYW5fXFxhbHBoYSBcXGlvdGEiLDIseyJzdHlsZSI6eyJib2R5Ijp7Im5hbWUiOiJkb3R0ZWQifX19XV0=
\[\begin{tikzcd}
  & {\ca_\kappa} && \ca_\kappa \\
  {\mathsf{Ind}\ca_\kappa} & \ca && {\mathsf{Ind}\ca_\kappa} & \ca
  \arrow["\iota", from=1-2, to=2-2]
  \arrow["\alpha"', from=1-2, to=2-1]
  \arrow["{\ca(\iota-,-)}", dashed, from=2-2, to=2-1]
  \arrow["\iota", from=1-4, to=2-5]
  \arrow["\alpha"', from=1-4, to=2-4]
  \arrow["{\lan_\alpha \iota}"', dotted, from=2-4, to=2-5]
\end{tikzcd}\]

% \begin{tikzcd}
%  &  \ca_\kappa \arrow[ld, "\alpha" description] \arrow[rd, "\iota" description] &  \\
% \text{Ind } \ca_\kappa \arrow[rr, "\lan_{\alpha}\iota" description, dotted, bend right=10] &  & \ca \arrow[ll, "{\ca(\iota-, -)}" description, dotted, bend right=10]
% \end{tikzcd}

Let us proceed to a description of the dashed and the dotted arrow.
\begin{itemize}
  \item[$\ca(\iota-,-)$] also known as \textit{the nerve of f}, maps $a \mapsto \ca(\iota-,a)$. $\ca(\iota-,a) : \ca_\kappa^\circ \to \Set$ is $\kappa$-flat, because $\ca$ is $\kappa$-accessible, thus it is flat (being a weaker condistion). Thus $\ca(\iota-,-)$ lands in $\text{Flat}(\ca_\kappa^\circ, \Set) = \mathsf{Ind}(\ca_\kappa)$. Moreover, $\ca(\iota-,-)$ is fully faithful because $ \ca_\kappa$ is a (dense) generator in $\ca$ and preserve $\kappa$-directed colimits. This functor does not lie in $\text{Acc}_{\omega}$ in general.
  \item[$\lan_{\alpha}(\iota)$] exists by the universal property of of the $\mathsf{Ind}$-completion, indeed $\ca$ has directed colimits by definition. For a concrete perspective $\lan_{\alpha}(\iota)$ is evaluating a formal directed colimit on the actual directed colimit in $\ca$. 
  \end{itemize}

  These two functors yield an adjunction \[\lan_{\alpha}(\iota) \dashv \ca(\iota-,-),\] that establishes $\ca$ as a reflective embedded subcategory of $\mathsf{Ind} \ca_\kappa$. Since $\lan_{\alpha}(\iota)$ is a left adjoint, it lies in $\text{Acc}_{\omega}$.
 \end{rem}

\begin{defn}\label{cover} When $\ca$ is a $\kappa$-accessible category we will indicate with $\mathcal{L}_{\ca}^{\kappa}$ the functor  $\lan_{\alpha}(\iota)$ in the previous remark and we call it a \textit{cover} of $\ca$. \[\mathcal{L}_{\ca}^{\kappa} : \mathsf{Ind} \ca_\kappa\to \ca .\]
\end{defn}

Notice that  every object $\ca$ in $\text{Acc}_{\omega}$ has a (proper class of) finitely accessible covers.
 
 \begin{notation}
 When it's evident from the context, or it is not relevant at all we will not specify the cardinal on the top, thus we will just write $\mathcal{L}_{\ca}$ instead of $\mathcal{L}^{\kappa}_{\ca}$.
 \end{notation}
 
 \begin{rem} When $\lambda \geq \kappa$, $\mathcal{L}_{\ca}^{\lambda}\cong  \mathcal{L}_{\ca}^{\kappa} \mathcal{L}_{\kappa}^{\lambda}$ for some transition map $\mathcal{L}_{\kappa}^{\lambda}$. We did not find any application for this, thus we decided not to go in the details of this construction.
 \end{rem}
  
%\begin{prop} \label{coverpresh}
%Every Scott topos admits a cover (geometric surjection) from a presheaf topos.
%\end{prop}
%\begin{proof}
%Let $\ca$ be a $\kappa$-accessible category with directed colimits. We claim that the $1$-cell $\mathsf{S}(\mathcal{L}^{\kappa}_{\ca})$ described in Rem. \Cref{cover} is the desired geometric surjection for the Scott topos $\mathsf{S}(\ca)$. By \Cref{surjections}, it is enough to prove that $\mathsf{S}(\mathcal{L}^{\kappa}_{\ca})$ is a pseudo epimorphism in Cat. But that is obvious, because it even has a section in Cat, namely $\ca(\iota,1)$.
%\end{proof}

%\begin{rem} The last result is not inexpected at all, topoi with enough points are precisely those topoi that admit a geometric surjection from a presheaf topos.
%\end{rem}

\begin{rem} This construction appeared for the first time in \cite[4.5]{aec} and later in \cite[2.5]{LB2014}, where it is presented as the analogue of Shelah's presentation theorem for AECs \cite[2.6]{LB2014}. The reader that is not familiar with formal category theory might find the original presentation more down to earth. In \cite[2.5]{LB2014} it is also shown that under interesting circumstances the cover is faithful.
\end{rem}

\subsection{On the (non) idempotency of the Scott adjunction}\label{scottidempotency}

This subsection provides a better version of \Cref{cheapidempotencyscott}. It is based on a technical notion (topological embedding) that we have defined and studied in \cite{thcat}. Let us recall the definition, we refer to that paper for the relevant results.

\begin{defn}[Topological embedding]
Let $f: \ca \to \cb$ be a $1$-cell in $\text{Acc}_{\omega}$. We say that $f$ is a topological embedding if $\mathsf{S}(f)$ is a geometric embedding.
\end{defn}

\begin{thm}  \label{enoughpoints}
A Scott topos $\cg \cong \mathsf{S}(\ca)$ has enough points.
\end{thm}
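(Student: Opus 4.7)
The plan is to reduce this to the idempotency theorem \Cref{idempotencyisbellcategorified}. The key observation is that \Cref{categorifiedscotttopology} already gives us the natural isomorphism $\mathsf{S} \cong \mathbb{O} \circ \mathsf{ST}$, so for any $\ca \in \text{Acc}_\omega$ one has an equivalence $\mathsf{S}(\ca) \simeq \mathbb{O}(\mathsf{ST}(\ca))$ exhibiting the Scott topos as the category of opens of a (generalized) bounded ionad, namely $\mathsf{ST}(\ca) = (\ca, r_\ca i_\ca)$. The implication $(3) \Rightarrow (1)$ of \Cref{idempotencyisbellcategorified} then immediately delivers the conclusion that $\mathsf{S}(\ca)$ has enough points.

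For a more hands-on proof, one can exhibit the jointly conservative family of points explicitly. Since $\ca$ is $\lambda$-accessible for some $\lambda$, the restriction functor along $\ca_\lambda \hookrightarrow \ca$ yields an embedding $\mathsf{S}(\ca) = \text{Acc}_\omega(\ca, \Set) \hookrightarrow \Set^{\ca_\lambda}$, which is fully faithful (by the restriction/Kan extension paradigm used in the proof of \Cref{scottconstructionwelldefined}) and is in fact the inverse image of a geometric morphism. In particular it is conservative, so the family of evaluation functors $\{\mathrm{ev}_a : \mathsf{S}(\ca) \to \Set\}_{a \in \ca_\lambda}$ is jointly conservative. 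It remains to check that each $\mathrm{ev}_a$ is the inverse image of a genuine point: cocontinuity is automatic because colimits in $\mathsf{S}(\ca)$ are computed pointwise in $\Set^\ca$ (directed colimits commute with colimits in $\Set$), and preservation of finite limits follows from the fact that finite limits in $\mathsf{S}(\ca)$ agree, after restriction to $\ca_\lambda$, with the pointwise finite limits in $\Set^{\ca_\lambda}$.

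The only step that requires any genuine care is the last one above, i.e.\ confirming that each $\mathrm{ev}_a$ for $a \in \ca_\lambda$ preserves finite limits. This is not completely automatic because $\mathsf{S}(\ca)$ sits in $\Set^{\ca}$ only as a coreflective subcategory, and coreflective subcategories need not share finite limits with their ambient. However, passing to the smaller $\Set^{\ca_\lambda}$ resolves the issue: the embedding $\mathsf{S}(\ca) \hookrightarrow \Set^{\ca_\lambda}$ is lex (as recorded in \Cref{scottconstructionwelldefined}), so finite limits of functors preserving directed colimits, when evaluated at $\lambda$-presentable objects, coincide with the pointwise ones, and $\mathrm{ev}_a$ inherits lex\-ness for $a \in \ca_\lambda$. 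Either of the two routes is short; the first is essentially a one-line citation, while the second makes the set of points one may take completely transparent.
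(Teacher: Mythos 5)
Your proposal is correct, and both of your routes differ from the paper's argument. The paper proves the theorem by exhibiting a geometric surjection from a presheaf topos (using the criterion \cite[2.2.12]{elephant2}): it applies $\mathsf{S}$ to the finitely accessible cover $\mathcal{L}^{\kappa}_{\ca} : \mathsf{Ind}\,\ca_\kappa \to \ca$ of \Cref{cover}, identifies $\mathsf{S}\mathsf{Ind}\,\ca_\kappa \simeq \Set^{\ca_\kappa}$, and shows that the inverse image (precomposition with $\mathcal{L}^{\kappa}_{\ca}$) is faithful because $\mathcal{L}^{\kappa}_{\ca}$ is a pseudo-epimorphism in $\mathrm{Cat}$, having the section $\ca(\iota-,-)$, citing \cite[4.6]{laxepi}. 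Your first route instead feeds $\mathsf{S} \cong \mathbb{O}\circ\mathsf{ST}$ into the implication $(3)\Rightarrow(1)$ of \Cref{idempotencyisbellcategorified}; this is legitimate given the paper's ordering of results, but be aware that it outsources the real content to \cite[Rem. 2.5]{ionads}, which is stated there for ionads over sets, so implicitly you are relying on the paper's (tacit) extension of that remark to generalized bounded ionads such as $\mathsf{ST}(\ca)=(\ca, r_\ca i_\ca)$. Your second route is essentially an unpacking of that extension and is the most self-contained of the three: the conservativity input --- full faithfulness of the restriction $\mathsf{S}(\ca)=\text{Acc}_\omega(\ca,\Set)\hookrightarrow \lambda\text{-Acc}(\ca,\Set)\simeq\Set^{\ca_\lambda}$ --- is in substance the same morphism the paper uses (the paper's surjection is, up to the equivalence $\mathsf{S}\mathsf{Ind}\,\ca_\kappa\simeq\Set^{\ca_\kappa}$, precomposition with the cover), but you justify faithfulness directly from the restriction/Kan-extension paradigm rather than via the external lemma of \cite{laxepi}, and you make the jointly conservative family of points $\{\mathrm{ev}_a\}_{a\in\ca_\lambda}$ explicit. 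One small remark: your caution about finite limits is slightly overstated, since finite limits of directed-colimit-preserving functors computed pointwise in $\Set^{\ca}$ again preserve directed colimits (finite limits commute with directed colimits in $\Set$), so lexness of the evaluations holds for every $a\in\ca$, not only after restriction to $\ca_\lambda$; the restriction is only needed to get a small jointly conservative family. What the paper's formulation buys in exchange is the identification of the surjection as $\mathsf{S}$ of a cover, which is the form reused in the proof of \Cref{scottidempotency}.
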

\begin{proof}
It is enough to show that $\cg$ admits a geometric surjection from a presheaf topos \cite[2.2.12]{elephant2}. Let $\kappa$ be a cardinal such that $\ca$ is $\kappa$-accessible. We claim that the $1$-cell \[\mathsf{S}(\mathcal{L}^{\kappa}_{\ca}): \mathsf{S}\mathsf{Ind } \ca_\kappa \to \mathsf{S} \ca,\] described in \Cref{cover} is the desired geometric surjection for the Scott topos $\mathsf{S}(\ca)$. Indeed $\mathsf{S}\mathsf{Ind } \ca_\kappa$ is  a presheaf topos, as \[\mathsf{S}\mathsf{Ind } \ca_\kappa = \text{Acc}_\omega(\mathsf{Ind } \ca_\kappa, \Set) \simeq \Set^{\ca_\kappa}.\] 
We now have to show that $\mathsf{S}(\mathcal{L}^{\kappa}_{\ca})$ is a geometric surjection. Among the possible characterizations, we show that the inverse image functor. $\mathsf{S}(\mathcal{L}^{\kappa}_{\ca})^*$ is faithful. Now, $\mathsf{S}(\mathcal{L}^{\kappa}_{\ca})^*$ is precisely the precomposition $g \mapsto g \circ \mathcal{L}^{\kappa}_{\ca}$. It was shown in \cite[4.6]{laxepi} that such functor is faithful (at the level of presheaf categories and thus a fortiori on the Scott topos) if $\mathcal{L}^{\kappa}_{\ca}$ is a pseudo epimorphism in Cat. Indeed this is true, because it has even a section in Cat, namely $\ca(\iota-,-)$.
\end{proof}

\begin{thm} The following are equivalent. \label{scottidempotency}
\begin{enumerate} 
    \item The counit $\epsilon: \mathsf{Spt}(\ce) \to \ce$ is an equivalence of categories.
    \item  $\ce$ has enough points and for all presentations $i^*: \Set^X \leftrightarrows \ce: i_*$, $\mathsf{pt}(i)$ is a topological embedding.
   \item $\ce$ has enough points and there exists a presentation $i^*: \Set^X \leftrightarrows \ce: i_*$ such that $\mathsf{pt}(i)$ is a topological embedding.
  
\end{enumerate} 
\end{thm}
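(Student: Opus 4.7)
The plan is to cycle through the three implications $(1) \Rightarrow (2) \Rightarrow (3) \Rightarrow (1)$, exploiting naturality of the Scott counit $\epsilon$ together with the standard orthogonal factorization of a geometric morphism into a surjection followed by an embedding. A key preliminary that I would isolate first is the fact that $\epsilon_{\mathsf{S}(\ca)}$ is an equivalence whenever $\ca$ is finitely accessible: the unit $\eta_\ca \colon \ca \to \mathsf{pt}\mathsf{S}(\ca)$ is then an equivalence (both sides identifying with $\mathsf{Ind}(\ca_\omega)$ via the formula $\mathsf{S}(\mathsf{Ind}(D)) \simeq \Set^D$ from the proof of \Cref{enoughpoints}), and the triangle identity $\epsilon_{\mathsf{S}(\ca)} \circ \mathsf{S}(\eta_\ca) = \mathrm{id}$ then forces $\epsilon_{\mathsf{S}(\ca)}$ itself to be an equivalence. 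In particular the counit is an equivalence at every presheaf topos $\Set^X$, since such a topos is equivalent to $\mathsf{S}(\mathsf{Ind}(\hat{X}))$ for $\hat{X}$ the Cauchy completion of $X$.

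For $(1) \Rightarrow (2)$, suppose $\epsilon_\ce$ is an equivalence; then $\ce$ has enough points by \Cref{enoughpoints}. Given any presentation $i \colon \ce \hookrightarrow \Set^X$, naturality of $\epsilon$ yields, after passing to direct images,
\[ i_* \circ (\epsilon_\ce)_* \;\simeq\; (\epsilon_{\Set^X})_* \circ \mathsf{Spt}(i)_*. \]
The left-hand side is a composite of a fully faithful functor and an equivalence, and the preliminary makes $(\epsilon_{\Set^X})_*$ an equivalence, so $\mathsf{Spt}(i)_*$ must be fully faithful; that is, $\mathsf{pt}(i)$ is a topological embedding. The implication $(2) \Rightarrow (3)$ is immediate, since every Grothendieck topos admits at least one presentation.

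For $(3) \Rightarrow (1)$, fix a presentation $i \colon \ce \hookrightarrow \Set^X$ with $\mathsf{pt}(i)$ a topological embedding, and apply the same naturality identity. The right-hand side $(\epsilon_{\Set^X})_* \circ \mathsf{Spt}(i)_*$ is now a composite of an equivalence and a fully faithful functor, hence fully faithful; cancelling the fully faithful $i_*$ on the left via the elementary lemma (if $fg$ and $f$ are fully faithful then so is $g$) shows $(\epsilon_\ce)_*$ is fully faithful, and thus $\epsilon_\ce$ is a geometric embedding. To complete the proof I must show that $\epsilon_\ce$ is also a geometric surjection, but this is immediate from the assumption that $\ce$ has enough points: the inverse image $\epsilon_\ce^* \colon \ce \to \mathsf{Spt}(\ce)$ sends $e$ to the functor $p \mapsto p^*(e)$, and its conservativity is precisely the enough-points condition (as already used in \Cref{idempotencyisbellcategorified}). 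A geometric morphism that is simultaneously a surjection and an embedding is an equivalence, so $\epsilon_\ce$ is one.

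The only non-routine ingredient I foresee is the preliminary lemma on Scott topoi of finitely accessible categories, which requires careful bookkeeping of Cauchy completions and the universal property of $\mathsf{Ind}$; everything else reduces to adjoint calculus and the standard factorization theory of geometric morphisms.
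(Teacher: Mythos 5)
Your proposal is correct and follows essentially the same route as the paper: both hinge on the (pseudo)naturality square $i \circ \epsilon_\ce \cong \epsilon_{\Set^X} \circ \mathsf{Spt}(i)$ for a presentation $i$, the fact that the counit is an equivalence at presheaf topoi, and cancellation of fully faithful direct images. The only minor deviations are that you re-derive the presheaf-topos case of the counit via the unit on finitely accessible categories and a triangle identity (the paper simply cites \cite{thcat}), and that in $(3) \Rightarrow (1)$ you get surjectivity of $\epsilon_\ce$ directly from conservativity of the evaluation functor $e \mapsto (p \mapsto p^*(e))$ --- exactly the enough-points condition, as in \Cref{idempotencyisbellcategorified} --- whereas the paper factors a geometric surjection $q: \Set^X \twoheadrightarrow \ce$ through $\epsilon_\ce$; both variants are sound.
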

\begin{proof}
As expected, we follow the proof strategy hinted at by the enumeration.
\begin{itemize}

    \item[$1) \Rightarrow 2)$] By \Cref{enoughpoints}, $\ce$ has enough points. We only need to show that for all presentations $i^*: \Set^X \leftrightarrows \ce: i_*$, $\mathsf{pt}(i)$ is a topological embedding. In order to do so, consider the following diagram,

    \begin{center}
    \begin{tikzcd}
\mathsf{Spt}\ce \arrow[dd, "\epsilon_\ce" description] \arrow[rr, "\mathsf{Spt}(i)" description] &  & \mathsf{Spt}\Set^X \arrow[dd, "\epsilon_{\Set^X}" description] \\
                                                                                                 &  &                                                                \\
\ce \arrow[rr, "i" description]                                                                  &  & \Set^X                                                        
\end{tikzcd}
    \end{center}
    By \cite{thcat}[Rem. 2.13] and the hypotheses of the theorem, one obtains that $\mathsf{Spt}(i)$ is naturally isomorphic to a composition of geometric embedding\[\mathsf{Spt}(i) \cong \epsilon_{\Set^X}^{-1} \ \circ \  i  \ \circ \  \epsilon_\ce,\]  and thus is a geometric embedding. This shows precisely that $\mathsf{pt}(i)$ is a topological embedding.

        \item[$2) \Rightarrow 3)$] Obvious.
  \item[$3) \Rightarrow 1)$] It is enough to prove that $\epsilon_\ce$ is both a surjection and a geometric embedding of topoi. $\epsilon_\ce$ is a surjection, indeed since $\ce$ has enough points, there exist a surjection $q: \Set^X \twoheadrightarrow \ce$, now we apply the comonad $\mathsf{S}\mathsf{pt}$ and we look at the following diagram,

    \begin{center}
  \begin{tikzcd}
\mathsf{S}\mathsf{pt}\Set^X \arrow[dd, "\mathsf{S}\mathsf{pt}(q)" description] \arrow[rr, "\epsilon_{\Set^X}" description] &  & \Set^X \arrow[dd, "q" description, two heads] \\
                                                                                                                           &  &                                               \\
\mathsf{S}\mathsf{pt}\ce \arrow[rr, "\epsilon_\ce" description]                                                            &  & \ce                                          
\end{tikzcd}
  \end{center}
  Now, the counit arrow on the top is an isomorphism, because $\Set^X$ is a presheaf topos. Thus $\epsilon_{\ce} \circ (\mathsf{S}\mathsf{pt})(q) $ is (essentially) a factorization of $q$. Since $q$ is a geometric surjection so must be $\epsilon_\ce$.  In order to show that $\epsilon_\ce$ is a geometric embedding, we use again the following diagram over the given presentation $i$.

  \begin{center}
  \begin{center}
    \begin{tikzcd}
\mathsf{Spt}\ce \arrow[dd, "\epsilon_\ce" description] \arrow[rr, "\mathsf{Spt}(i)" description] &  & \mathsf{Spt}\Set^X \arrow[dd, "\epsilon_{\Set^X}" description] \\
                                                                                                 &  &                                                                \\
\ce \arrow[rr, "i" description]                                                                  &  & \Set^X                                                        
\end{tikzcd}
    \end{center}
  \end{center}
This time we know that $\mathsf{Spt}(i)$ and $i$ are geometric embeddings, and thus $\epsilon_\ce$ has to be so.
\end{itemize}
\end{proof}

\begin{rem}
The version above might look like a technical but not very useful improvement of \Cref{cheapidempotencyscott}. Instead, in the following Corollary we prove a non-trivial result based on a characterization (partial but useful) of topological embeddings contained in the Toolbox of \cite{thcat}.
\end{rem}

\begin{exa} \label{enoughpointscocomplete}
Let $\ce$ be a topos with enough points, together with a presentation $i: \ce \to \Set^C$. If  $\mathsf{pt}(\ce)$ is reflective via $\mathsf{pt}(i)$, then $\epsilon_\ce$ is an equivalence of categories.
\end{exa}
\begin{proof}
We verify the condition (3) of the previous theorem. By \cite{thcat}[Thm. 4.10], $\mathsf{pt}(i)$ must be a topological embedding.
\end{proof}

\begin{exa}[The Schanuel topos]
The Schanuel topos $\mathcal{S}$ is the topos of sheaves over the category $\text{Fin}_\text{m}$ of finite sets and monomorphisms equipped with the atomic topology \cite[pp. 79f, 691, 925]{Sketches}. Obviously it admits a canonical presentation as a subtopos $i: \mathcal{S} \hookrightarrow \Set^{\text{Fin}^\circ_\text{m}}$. The corresponding map on points is the inclusion of countable sets (and monos) in the category of sets and monos, \[\mathsf{pt}(i) : \Set_{\geq \omega, m} \to \Set_{m}.\]One can see the whole \cite[Sec. 4]{simon} as a proof of the fact that such a map is a topological embedding and thus the Schanuel topos can be recovered from its category of points. Along these lines we have other similar examples provided in the last section of \cite{simon}.
\end{exa}

Notice that the example of the Schanuel topos displays the complexity of the situation we are in. Using \Cref{scottidempotency} it is still quite hard to show that $\epsilon$ is an equivalence of categories (\cite[Sec. 4]{simon} is far from being trivial), because we lack a satisfactory theory of topological embeddings, but it makes it at least feasible.

\section*{Acknowledgements}
I am indebted to my advisor, \textit{Jiří Rosický}, for the freedom and the trust he blessed me with during the years of my Ph.D, not to mention his sharp and remarkably blunt wisdom. I am grateful to \textit{Axel Osmond} for some very constructive discussions on this topic, to \textit{Richard Garner} for several comments that have improved the paper both technically and linguistically, to \textit{Tomáš Jakl} who provided all the references for the history of the theory of locales. I am grateful to \textit{Andrea Gagna} for having read and commented a preliminary draft of this paper. Finally, I am grateful to the \textit{anonymous referee} for their suggestions and comments.

\bibliography{thebib}
\bibliographystyle{alpha}

\end{document}